\theoremstyle{plain}
\newtheorem{theorem}{Theorem}[section]
\newtheorem{proposition}{Proposition}[section]
\newtheorem{corollary}{Corollary}[section]
\newtheorem{lemma}{Lemma}[section]
\newtheorem{definition}{Definition}[section]
\theoremstyle{remark}
\newtheorem{remark}{Remark}[section]
\newtheorem{assumption}{Assumption}[section]
\DeclareMathOperator{\diverg}{div}
\DeclareMathOperator{\im}{Im}
\DeclareMathOperator{\Tr}{Tr}
\DeclareMathOperator{\Ext}{Ext}
\newcommand{\A}{\ensuremath{\mathcal A}}
\newcommand{\E}{\ensuremath{\mathcal E}}
\newcommand{\T}{\ensuremath{\mathcal T}}
\newcommand{\oO}{\ensuremath{ \overline
  {\Omega}}}
\newcommand{\pO}{\ensuremath{ \partial {\Omega}}}
\newcommand{\EpO}{\ensuremath{ \E_{\partial\Omega}}}
\begin{document}

\title{Fractal snowflake domain diffusion with boundary and interior drifts}

\author{Michael Hinz$^1$}
\address{$^1$Fakult\"at f\"ur Mathematik, Universit\"at Bielefeld, Postfach 100131, 33501 Bielefeld, Germany}
\email{mhinz@math.uni-bielefeld.de}
\thanks{$^1$Research supported in part by SFB 701 of the German Research Council (DFG)}

\author{Maria Rosaria Lancia$^2$}
\address{$^2$Dipartimento di Scienze di Base e Applicate per l'Ingegneria,
Sapienza, Universita di Roma
Via A. Scarpa 16, 00161 Roma, Italy}
\email{maria.lancia@sbai.uniroma1.it}
\thanks{$^2$, $^4$ The authors have been supported by the Gruppo
Nazionale per l'Analisi Matematica, la Probabilit\`a e le loro
Applicazioni (GNAMPA) of the Istituto Nazionale di Alta Matematica
(INdAM)}

\author{Alexander Teplyaev$^3$}
\address{$^3$Department of Mathematics, University of Connecticut, Storrs, CT 06269-3009 USA}
\email{teplyaev@uconn.edu}
%\thanks{$^3$Research supported in part by }

\author{Paola Vernole$^4$}
\address{$^4$Dipartimento di Matematica,
Universita degli Studi di Roma âLa Sapienzaâ,
P.zale Aldo Moro 2, 00185 Roma, Italy}
\email{vernole@mat.uniroma1.it}

\date{\today}

\begin{abstract}
We study a parabolic Ventsell problem for a second order differential operator in divergence form and with interior and boundary drift terms on the snowflake domain. 
We prove that under standard conditions a related Cauchy problem possesses a unique classical solution and explain in which sense it solves a rigorous formulation of the initial Ventsell problem. As a second result we prove that functions that are intrinsically Lipschitz on the snowflake boundary admit Euclidean Lipschitz extensions to the closure of the entire domain. Our methods combine the fractal membrane analysis, the vector analysis for local Dirichlet forms and PDE on fractals, coercive closed forms, and the analysis of Lipschitz functions.

\tableofcontents\end{abstract}\maketitle

\section{Introduction}

The main objective of this paper is to study a parabolic Ventsell problem for a second order differential operator in divergence form with measurable coefficients and drift terms in the interior and with drift and diffusion terms on the boundary of the classical snowflake domain $\Omega$, see Figure~\ref{fig-snow}. This continues the investigation of diffusion problems related to fractal membranes started in \cite{La02} (see \cite{CLa14,LaV14} for the most recent results and references).

  \begin{figure}
 \includegraphics %[angle=90]
 {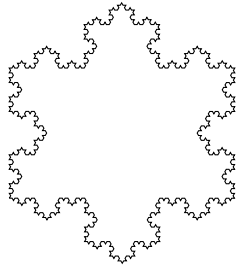}
 \caption{Fractal 
 (closed) 
 snowflake domain 
  $ %\overline
  {\Omega}$.}
  \label{fig-snow}
 \end{figure}
 
A central tool in our analysis is the expression
\[\E_{\mathcal{A}}(f,f)=\int_{\Omega} (A (x)\cdot\nabla f(x))\cdot\nabla f(x)\mathcal{L}^2(dx)+c_0\E_{\partial\Omega}(f,f), \]
which, together with a suitable domain of definition, is a local Dirichlet form.
Here \A\ is a bounded measurable uniformly elliptic two-by-two matrix valued function, $\mathcal{L}^2$ denotes the usual two dimensional Lebesgue measure, $c_0>0$ is a fixed constant, and $\E_{\partial\Omega}$ denotes the usual Kusuoka-Kigami Dirichlet form on the snowflake boundary
$\partial\Omega$, which is a union of three copies of the classical Koch curve, \cite{Ki93, Ki01, Ku89, Ku93}. Using perturbation arguments we can pass from $\mathcal{E}_{\mathcal{A}}$ to the bilinear form corresponding to the parabolic Ventsell problem that we wish to study. This problem can be formulated as
\begin{equation}\label{e1}
\begin{cases}
u_t(t,x)-L_\mathcal{A} u(t,x)-\vec{b}(x)\cdot\nabla u(t,x)=f(t,x)& \   \text{\    in $(0,T]\times\Omega$} 
\\
u_t(t,x)-c_0\Delta_{\partial\Omega} u(t,x)-b_{\partial\Omega}(x) D_{\partial\Omega} u(t,x)+c(x)u(t,x)&\\
\ \text{\ \hskip11em} =-\dfrac{\partial u(t,x)}{\partial n_\A}+f(t,x) & \   \text{\   in $(0,T]\times \partial\Omega$} 
\\
u(0,x)=u_0(x) & \ \text{\   in $\Omega$}.\\
\end{cases}
\end{equation}

Here $L_\mathcal{A} u(t,\cdot)=\diverg(\mathcal{A}\cdot\nabla u(t,\cdot))$, and $\Delta_{\partial\Omega}$ is the Kusuoka-Kigami Laplacian on $\partial\Omega$. The function $f$ is a time-dependent external forcing, the function $c$ is a stationary scalar potential on $\partial\Omega$, and $u_0$ is a given initial condition. The term 
{\scriptsize $\dfrac{\partial u(t,\cdot)}{\partial n_\A}$} is the co-normal derivative of $u(t,\cdot)$ across $\partial\Omega$, to be defined in a suitable distributional sense, see for example \cite{LaV14} or Section \ref{S:co-normal}. The symbol $\vec{b}$ denotes a stationary drift vector field in $\Omega$, and $b_{\partial\Omega}$ is a drift vector field on the snowflake boundary $\partial\Omega$. This drift $b_{\partial\Omega}$ is defined using a special case of the intrinsic approach to derivatives for Dirichlet forms, see e.g. \cite{CS03, HRT13}, applied to the form $\mathcal{E}_{\partial\Omega}$. We point out that $b_{\partial\Omega}$ does not have to be the trace of $\vec{b}$ on $\partial\Omega$. The expression $D_{\partial\Omega}u(t,\cdot)$ plays the role of the tangential derivative of $u(t,\cdot)$ along $\partial\Omega$. We prove that it can be defined as the measurable gradient on the fractal boundary $\partial\Omega$ of the restriction $u(t,\cdot)|_{\partial\Omega}$ of $u(t,\cdot)$ to $\partial\Omega$. In principle this is nothing else but a version of the gradient in the sense of Kusuoka \cite{Ku93}, Kigami \cite{Ki93}, Strichartz \cite{Str00} and Teplyaev \cite{T00}. But only the more functional analytic description \cite{CS03, CS09, HRT13, IRT} permits sufficient flexibility for applications to partial differential equations \cite{HRT13, HTams}, magnetic operators \cite{H14b, HR14, HTc} or geometric analysis \cite{HKT15, HTams, HTcurl}. There are (locally harmonic) coordinate functions $h_i$ on the snowflake $\partial\Omega$, and for functions on $\partial \Omega$ locally representable as $F_i\circ h_i$ with bounded functions $F_i\in C^1(\mathbb{R})$ we locally have
\begin{equation}\label{E:composition}
D_{\partial\Omega}(F_i\circ h_i)(x)=F_i'(h_i(x)),
\end{equation}
i.e. the measurable gradient admits a pointwise bounded expression. In this case the measurable gradient can also be expressed as a limit of difference quotients in the spirit of Mosco \cite{Mo02, Mo04}, see Section \ref{S:tangential}. The span of such functions is a dense subspace of the domain of $\mathcal{E}_{\partial\Omega}$.

Abstract gradients can be defined for any Dirichlet form, \cite{CS03}, and this is already sufficient to formulate and study partial differential equations with first order terms on some abstract level, \cite{HRT13, HTams}. Interpretations of these abstract gradients as limits of difference quotients, \cite{Mo02, Mo04}, or in terms of finite element approximations, \cite{CS09, IRT}, are possible also for energy forms on other sufficiently structured fractals, such as for instance resistance forms on p.c.f. self-similar sets, \cite{Ki93, Ki01, T08}. In many cases one can find pointwise bounded coordinate representations similar to (\ref{E:composition}), what allows to consider non-linear equations such as in \cite{HRT13}.

The novelty of the present paper is that we prove that in our situation $D_{\partial\Omega}u(t,\cdot)$ is a well defined $L^2$-function on $\partial\Omega$, and so we can consider problem (\ref{e1}) for instance with a bounded measurable function  $b_{\partial\Omega}$ on the fractal $\partial\Omega$, which then plays the role of a (bounded measurable) tangential drift vector field. 
This is very special and only possible because $\partial\Omega$ is a curve. 
An interpretation of measurable gradients as measurable vector-valued functions will generally 
 be more involved for other more complicated fractal sets.
 However, such  interpretation  
 is very important for considering non-linear PDEs, 
 such as those in  \cite[Section 4]{HRT13} and 
 \cite[Sections  4, 5]{AkkermansMallick}.

 The Koch snowflake domain appears in  \cite{FLV95, LG97, L91, LNRG96, LP95,vdB}. 
 Our current paper is primarily motivated by physics considerations, see 
 \cite{AkkermansMallick} and, in broader terms, \cite{A,ADT,ADT2010PRL,ADTV,Dunne}, 
  and by  \cite{HTcurl}. An extensive discussion of potential applications and numerical implementations can be found in \cite{LaV14}. We would like to point out that, up to the behaviour at time $t=0$ problem (\ref{e1}) extends \cite[($\overline{P}$), page 682]{LaV14}, which correspond to the special case with $\vec{b}\equiv 0$, $b_{\partial\Omega}\equiv 0$ and $u_0=0$. 
   
 We note that the Koch curve is a Lindstr{\o}m's nested fractal, see \cite{Li}, and so the papers \cite{HK03,HinoK2006,Ku2000} provide some heat semigroup  
 %and large deviations 
 estimates. See especially  Theorem 2.11 in \cite{Ku2000}, where ultracontractivity bounds for the Brownian motion penetrating into the fractal are deduced from
 ultracontractivity bounds for the diffusion on the fractal. However, pointwise heat kernel estimates are not available in the existing literature, in particular because the reference measure on  \oO\ is not volume doubling, although  
  \cite{GK15} contains some results towards obtaining such heat kernel estimates. 

Our first main result is Theorem \ref{T:strictsol}, which claims that under standard conditions on $\mathcal{A}$, $\vec{b}$, $b_{\partial\Omega}$, $c$ and $f$ (such as ellipticity, form-boundedness, continuity and H\"older continuity) a suitable reformulation of (\ref{e1}) as an abstract Cauchy problem in an $L^2$-space has a unique classical solution.
As in \cite{LaV14} one can then pass to a rigorous reformulation of (\ref{e1}), see Section \ref{S:co-normal}, in particular (\ref{E:interpret}).

A second, auxiliary main result is an extension theorem for Lipschitz functions that complements known extension results,\cite{E1,E2}, and connects to coordinate methods, \cite{HKT15,HT15a}. The Dirichlet form $\mathcal{E}_{\partial\Omega}$ induces an intrinsic metric on $\partial\Omega$, cf. \cite{BM95, Mo94, Stu94, Stu95}. Theorem \ref{thm:lip} states that any $\mathcal{E}_{\partial\Omega}$-intrinsically Lipschitz function on $\partial\Omega$ has a Euclidean-Lipschitz extension to $\overline{\Omega}$. As a consequence, the Dirichlet form $\mathcal{E}_{\mathcal{A}}$ is shown to possess a core consisting of functions $u$ such that $u|_{\Omega}\in C^1(\Omega)$ and $u|_{\partial\Omega}$ is $\mathcal{E}_{\partial\Omega}$-intrinsically $C^1$ on $\partial\Omega$ in a suitable sense, Corollary \ref{cor:coord}. Moreover, one can find two coordinate functions $y_1,y_2$ 
   which are contained in the core and separate the points of  \oO. We believe that the coordinate approach will permit new regularity results for first order terms on $\partial\Omega$. For Theorem \ref{thm:lip} we provide two different proofs, which both proceed by triangulation methods and therefore are of potential interest for numerical implementations.

We proceed as follows. In Section \ref{enercurve} we recall the definition of the Kusuoka-Kigami energy form $\mathcal{E}_{\partial\Omega}$ and Laplacian $\Delta_{\partial\Omega}$ on the snowflake $\partial\Omega$. In Section \ref{S:tangential} we define the tangential gradient operator $D_{\partial\Omega}$ as an operator on the natural $L^2$-space on $\partial\Omega$. Section \ref{S:Cauchy} contains a set of standard conditions under which (\ref{e1}) admits a suitable reformulation as an $L^2$-Cauchy problem that has a unique classical solution. Following \cite{LaV14} we provide a concrete interpretation of this Cauchy problem as a version of (\ref{e1}) in Section \ref{S:co-normal}, the main issue here is a suitable explanation of the co-normal derivative. The extension theorem is discussed in Section \ref{S:extension} and the two proofs are provided in Subsections \ref{subsub:lip} and \ref{subsub:lip2}, respectively.

We agree to write $\mathcal{E}_{\mathcal{A}}(f):=\mathcal{E}_{\mathcal{A}}(f,f)$, and we do similarly whenever we insert the same argument twice in other bilinear expressions. 

\section{Energy and Laplacian on the Snowflake}\label{enercurve}

To fix notation we briefly recall the construction of the Koch curve. The Koch curve $K$ is the unique nonempty compact subset $K$ of $\mathbb{C}$ such that $K=\bigcup_{i=1}^4 \psi_i(K)$, where $\psi_1(z)=\frac{z}{3}$, $\psi_2(z)=\frac{z}{3}e^{i\frac{\pi}{3}}+\frac13$, $\psi_3(z)=\frac{z}{3}e^{-i\frac{\pi}{3}}+i\frac{\sqrt{3}}{6}$ and $\psi_4(z)=\frac{z+2}{3}$. The set $V_0(K)=\left\lbrace 0, 1\right\rbrace\subset \mathbb{C}$ is the set of essential fixed points of the iterated function system 
$\left\lbrace \psi_1, ..., \psi_4\right\rbrace$. For $j_1,...,j_n\in \left\lbrace 1,2,3,4\right\rbrace$ write $\psi_{j_1\cdots j_n}:=\psi_{j_1}\circ\cdots\circ\psi_{j_n}$ and $V_{j_1\cdots j_n}(K):=\psi_{j_1\cdots j_n}(V_0(K))$. For fixed $n$ we put \[V_n(K):=\bigcup_{j_1,\dots, j_n=1}^4 V_{j_1\cdots j_n}(K)\] 
and finally, set 
\[V_\ast(K):=\bigcup_{n\geq 0} V_n(K).\]

We recall the construction of the Kusuoka-Kigami energy form on the Koch curve $K$, \cite{FL04, Ki01}. Setting 
\begin{equation}\label{enu}
\E_K^{(n)}(u):=\frac{1}{2}\ 4^n \sum_{p\in V_n(K)}\sum_{q\sim_n p} (u(p)-u(q))^2, 
\end{equation}
we can define discrete energy forms $\E_K^{(n)}$ on the space of functions $u:V_\ast(K)\to \mathbb{R}$. The notation $q\sim_n p$ in (\ref{enu}) means that we sum over all $q$ that are $n$-neighbors of $p$. Due to the rescaling with prefactor $4^n$ the discrete energies $\E _K^{(n)}(u)$ are non-decreasing in $n$ and 
\begin{equation}\label{2bis}
\E_K(u):=\lim_{n \to \infty}\E_K^{(n)}(u)
\end{equation} 
defines a non-trivial quadratic form $\E_K$ with domain $\mathcal{D}_{\ast}(\mathcal{E}_K):=\{u:V_\ast(K)\rightarrow \mathbb{R}: \E_K (u)<\infty \}$. The metric
\[d_R(p,q):=\sup\left\lbrace\frac{|u(p)-u(q)|^2}{\mathcal{E}_K(u)}:\ \ u\in \mathcal{D}_{\ast}(\mathcal{E}_K), \ \mathcal{E}_K(u)>0\right\rbrace, \ \ p,q\in V_\ast(K),\]
on $V_\ast(K)$ extends to a metric on $K$. The metric space $(K, d_R)$ is compact, and the resistance topology (i.e. the topology induced by $d_R$) agrees with the Euclidean trace topology, \cite{Ba98, HW06}. The functions in $\mathcal{D}_{\ast}(\mathcal{E}_K)$ are seen to extend uniquely to functions on $K$ which are continuous in resistance topology, we denote these extensions by the same symbol. We write $\mathcal{D}(\mathcal{E}_K) :=\{u \in C(K): \mathcal{E}_K  (u)<\infty \}$, where $\mathcal{E}_K  (u):=\mathcal{E}_K (u|_{V_\ast(K)})$. By polarization this defines a resistance form $(\mathcal{E}_K,\mathcal{D}(\mathcal{E}_K))$ on $K$ in the sense of Kigami \cite{Ki01, Ki03, Ki12}.
For any $u\in\mathcal{D}(\mathcal{E}_K)$ and any $x,y\in K$ we have
\begin{equation}\label{E:resistest}
|u(x)-u(y)|\leq d_R(x,y)^{1/2}\mathcal{E}_K(u)^{1/2}.
\end{equation}
The resistance form $(\mathcal{E}_K,\mathcal{D}(\mathcal{E}_K))$ is regular, that is, $\mathcal{D}(\mathcal{E}_K)$ is uniformly dense in $C(K)$.
 For any $u,v\in\mathcal{D}(\mathcal{E}_K)$ we have
\[\E_K(u,v):=\lim_{n\to\infty}\frac{1}{2}\ 4^n \sum_{p\in
V_n(K)}\sum_{q\sim_n p}
(u(p)-u(q))(v(p)-v(q)).\]
A function $u$ on $K$ is said to be harmonic with boundary values $u(0)$ and $u(1)$ on $V_0(K)$ if it minimizes the discrete energies $\mathcal{E}_K^{(n)}$ for all $n\geq 1$. In this case $u\in \mathcal{D}(\mathcal{E}_K)$ and for all $n\geq 1$ we have
\[\mathcal{E}_K(u)=\mathcal{E}_K^{(n)}(u)=\mathcal{E}_K^{(0)}(u)=\frac12(u(1)-u(0))^2.\] 
In the sequel let $h$ denote the unique harmonic function on $K$ with boundary values $h(0)=0$ and $h(1)=1$. We use $h$ as a coordinate function on $K$. It provides a homeomorphism of $K$ onto $[0,1]$. If we let $\delta=\dim_H K=\frac{\ln 4}{\ln 3}$ denote the Hausdorff dimension of $K$, then $K$, equipped with the quasi-distance $d_{\delta}(x,y):=|x-y|^\delta$, $x,y\in K$, is a variational fractal in the sense of \cite{Mo97}, and for any $p,q\in V_n(K)$ with $p\sim_n q$ we have 
\begin{equation}\label{E:Moscoformula}
|h(p)-h(q)|=d_\delta(p,q)=|p-q|^\delta.
\end{equation}
The space
\[\mathcal{S}_K:=\left\lbrace F\circ h: F\in C^1(\mathbb{R})\right\rbrace\] 
is a dense subspace of $\mathcal{D}(\mathcal{E}_K)$, \cite[Theorem 7]{T08}. Let $\mu_K$ denote the energy measure of $h$, that is, the unique nonnegative and finite Radon measure on $K$ such that 
\begin{equation}\label{E:BH}
\mathcal{E}_K(h,uh)-\frac12\mathcal{E}_K(u,h^2)=\int_Ku\:d\mu_K, \ \ u\in \mathcal{D}(\mathcal{E}_K).
\end{equation}
The chain rule, \cite[Theorem 3.2.2]{FOT94}, now implies that 
\begin{equation}\label{E:chain}
\mathcal{E}_K(u)=\int_{K} (F'(h(x)))^2 \mu_K(dx),
\end{equation}
for any $u=F\circ h$ from $\mathcal{S}_K$. From (\ref{enu}) and a straightforward calculation (similarly as in the case of the unit interval) one can see that up to the multiplicative constant $\frac12$, the energy measure $\mu_K$ of $h$ is just the normalized self-similar Hausdorff measure on $K$. Clearly the measure is finite and non-atomic and has full support. Therefore the form $(\mathcal{E}_K,\mathcal{D}(\mathcal{E}_K))$ is a strongly local regular Dirichlet form on $L^2(K,\mu_K)$, see e.g. \cite[Theorem 8.4]{Ki12}.

We pass to the Dirichlet form on the Koch Snowflake $\partial\Omega$, \cite{FL04}. 
The latter can be obtained as the union of three congruent copies $K_1$, $K_2$ and $K_3$ of the Koch curve $K$, arranged as symbolically depicted in Figure \ref{F:copies123}. Alternatively it can be obtained as the union of the three 'shifted' copies $K_4$, $K_5$ and $K_6$, Figure \ref{F:copies456}. Let $\varphi_i$, $i=1,...,6$, be the six uniquely determined orientation preserving Euclidean motions such that each $\varphi_i$ maps $K_i$ to $K$ so that for each $i$ the boundary point of $K_i$ which in counterclockwise orientation of $\partial\Omega$ comes first is mapped to $0\in K$.

\begin{figure}
\begin{minipage}{0.45\textwidth}
\begin{center}
\begin{tikzpicture}[scale=0.45]
\draw[black, very thick] (0.3,0)--(3,0);
\draw[black, very thick] (3,0)--(4.5,2.598);
\draw[black, very thick] (6,0)--(4.5,2.598);
\draw[black, very thick] (6,0)--(8.7,0);
\node at (4.5,1) {$K_1$};
\begin{scope}[shift={(9,0)}, rotate=-120]
\draw[black, very thick] (0.3,0)--(3,0);
\draw[black, very thick] (3,0)--(4.5,2.598);
\draw[black, very thick] (6,0)--(4.5,2.598);
\draw[black, very thick] (6,0)--(8.7,0);
\node at (4.5,1) {$K_2$};
\end{scope}
\begin{scope}[shift={(4.5,-7.794)}, rotate=-240]
\draw[black, very thick] (0.3,0)--(3,0);
\draw[black, very thick] (3,0)--(4.5,2.598);
\draw[black, very thick] (6,0)--(4.5,2.598);
\draw[black, very thick] (6,0)--(8.7,0);
\node at (4.5,1) {$K_3$};
\end{scope}
\end{tikzpicture}
\end{center}
\caption{Using the copies $K_1$, $K_2$ and $K_3$.}\label{F:copies123}
\end{minipage}
\hfill
\begin{minipage}{0.45\textwidth}
\begin{center}
\begin{tikzpicture}[scale=0.45]
\begin{scope}[shift={(4.5,2.598)}, rotate=-60]
\draw[black, very thick] (0.3,0)--(3,0);
\draw[black, very thick] (3,0)--($(4.5, {sqrt(6.75)})$);
\draw[black, very thick] (6,0)--(4.5,2.598);
\draw[black, very thick] (6,0)--(8.7,0);
\node at (4.5,1) {$K_4$};
\end{scope}
\begin{scope}[shift={(9,-5.196)}, rotate=-180]
\draw[black, very thick] (0.3,0)--(3,0);
\draw[black, very thick] (3,0)--($(4.5, {sqrt(6.75)})$);
\draw[black, very thick] (6,0)--(4.5,2.598);
\draw[black, very thick] (6,0)--(8.7,0);
\node at (4.5,1) {$K_5$};
\end{scope}
\begin{scope}[shift={(0,-5.196)}, rotate=-300]
\draw[black, very thick] (0.3,0)--(3,0);
\draw[black, very thick] (3,0)--($(4.5, {sqrt(6.75)})$);
\draw[black, very thick] (6,0)--(4.5,2.598);
\draw[black, very thick] (6,0)--(8.7,0);
\node at (4.5,1) {$K_6$};
\end{scope}
\end{tikzpicture}
\end{center}
\caption{Using the copies $K_4$, $K_5$ and $K_6$.}\label{F:copies456}
\end{minipage}
\end{figure}

Set
\[\mathcal{D}(\mathcal{E}_{\partial\Omega}):=\left\lbrace u:\partial\Omega\to\mathbb{R}: u|_{K_i}\circ \varphi_i^{-1}\in\mathcal{D}(\mathcal{E}_K),\ i=1,...,6\right\rbrace\]
and define
\begin{equation}\label{E:energysnow}
\mathcal{E}_{\partial\Omega}(u):=\mathcal{E}_{K_1}(u|_{K_1})+\mathcal{E}_{K_2}(u|_{K_2})+\mathcal{E}_{K_3}(u|_{K_3}),\ \ u\in\mathcal{D}(\mathcal{E}_{\partial\Omega}).
\end{equation}
In (\ref{E:energysnow}) we could replace $K_1$, $K_2$ and $K_3$ by $K_4$, $K_5$ and $K_6$. As the $\varphi_i$ are just Euclidean motions, we notationally identify  $u|_{K_i}$ with $u|_{K_i}\circ \varphi_i^{-1}$ and also identify the measures $\mu_{K_i}$, defined similarly as in (\ref{E:BH}), with 
their image measures under the $\varphi_i^{-1}$. Note that on each copy $K_i$ of $K$ the function $h_i=h\circ \varphi_i$ is a harmonic function as $h$ above (so that in counterclockwise orientation the boundary point of $K_i$ with value zero comes first). We equip $\partial\Omega$ with the measure  
\[\mu:=\mu_{K_1}+\mu_{K_2}+\mu_{K_3}.\] 
Then the quadratic form $(\mathcal{E}_{\partial\Omega}, \mathcal{D}(\mathcal{E}_{\partial\Omega}))$ defines a strongly local Dirichlet form on $L^2(\partial\Omega,\mu)$. We refer to it as the \emph{(Kusuoka-Kigami) energy form} on the snowflake. By $(\Delta_{\partial\Omega}, \mathcal{D}(\Delta_{\partial\Omega}))$ we denote the infinitesimal generator of $(\mathcal{E}_{\partial\Omega}, \mathcal{D}(\mathcal{E}_{\partial\Omega}))$ in the usual $L^2$-sense. We refer to it as the \emph{(Kusuoka-Kigami) Laplacian} on the snowflake. It can also be viewed in the variational sense as an operator $\Delta_{\partial\Omega}$ from the energy domain $\mathcal{D}(\mathcal{E}_{\partial\Omega})$ into its dual $(\mathcal{D}(\mathcal{E}_{\partial\Omega}))'$ so that
\[\left\langle \Delta_{\partial\Omega}f, g\right\rangle_{((\mathcal{D}(\mathcal{E}_{\partial\Omega}))',\mathcal{D}(\mathcal{E}_{\partial\Omega}))}=-\mathcal{E}_{\partial\Omega}(f,g),\ \ f,g\in\mathcal{D}(\mathcal{E}_{\partial\Omega}),\]
where $\left\langle\cdot,\cdot\right\rangle_{((\mathcal{D}(\mathcal{E}_{\partial\Omega}))',\mathcal{D}(\mathcal{E}_{\partial\Omega}))}$ denotes the dual pairing between the spaces $(\mathcal{D}(\mathcal{E}_{\partial\Omega}))'$ and $\mathcal{D}(\mathcal{E}_{\partial\Omega})$. We will use this interpretation in Section \ref{S:co-normal}.

\section{Tangential gradients on the snowflake}\label{S:tangential}

In this section we introduce the notion of tangential derivative on the
boundary $\partial\Omega$ of $\Omega$ and we prove that it is a $L^2$-function.
This is achieved by following the seminal paper \cite{Mo02} (see also
\cite{Mo04}), where the (discrete) tangential derivative is defined as the
limit of a suitable differential quotient, and by using the harmonic
coordinate method developped in \cite{T08}. The construction may also be seen as a particularly simple example of a measurable Riemannian structure, \cite{Hino13}. 

Let 
\[\mathcal{S}_{\partial\Omega}:=\left\lbrace u:\partial\Omega\to\mathbb{R}: u|_{K_i}\in\mathcal{S}_{K_i}, i=1,...,6\right\rbrace\]
where $\mathcal{S}_{K_i}:=\left\lbrace F\circ h_i: F\in C^1(\mathbb{R})\right\rbrace$. In a sense, the following density result is a special case of \cite[Theorem 7]{T08}.

\begin{lemma}
The space $\mathcal{S}_{\partial\Omega}$ is dense in $\mathcal{D}(\mathcal{E}_{\partial\Omega})$. 
\end{lemma}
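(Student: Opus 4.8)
The plan is to transport everything to the unit interval by the harmonic coordinates $h_i$, reduce the assertion to a concrete one–dimensional density statement, and then do the only real work, which is a modification near the three vertices of the underlying triangle. For the model picture: by the chain rule \eqref{E:chain} and the identification of $\mu_K$ (up to the factor $\tfrac12$) with the normalized self–similar Hausdorff measure on $K$, the coordinate $h$ carries, up to a fixed positive constant, the pair $(\mathcal{E}_K,L^2(K,\mu_K))$ onto $\big(\int_0^1(F')^2\,dt,\ L^2(0,1)\big)$; since $\mathcal{S}_K$ is dense in $\mathcal{D}(\mathcal{E}_K)$ by \cite[Theorem 7]{T08} and $\mathcal{D}(\mathcal{E}_K)$ is complete in the form norm, the map $F\mapsto F\circ h$ extends to an isomorphism of Hilbert spaces from $H^1(0,1)$ onto $\mathcal{D}(\mathcal{E}_K)$ taking $C^1[0,1]$ onto $\mathcal{S}_K$. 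Doing this on each of the three sides, and using that for $j=1,2,3$ the shifted copy $K_{j+3}$ is the half of $K_j$ ending at a vertex $v$ together with the half of $K_{j'}$ ($j'\equiv j+1$) beginning at $v$, and that along each of these two halves $h_{j+3}$ is an affine function of slope $1$ of $h_j$ (resp. $h_{j'}$) — so that finiteness of the energy on $K_4,K_5,K_6$ is automatic once it holds on $K_1,K_2,K_3$ — one identifies $u\mapsto(u|_{K_1},u|_{K_2},u|_{K_3})$ with an isomorphism of $\mathcal{D}(\mathcal{E}_{\partial\Omega})$ onto
\[\Big\{(F_1,F_2,F_3)\in H^1(0,1)^3:\ F_1(1)=F_2(0),\ F_2(1)=F_3(0),\ F_3(1)=F_1(0)\Big\},\]
the three equalities expressing the continuity of $u$ at the three vertices in the chosen orientation. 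Under the same identification $u\in\mathcal{S}_{\partial\Omega}$ corresponds exactly to a triple $(F_1,F_2,F_3)\in C^1[0,1]^3$ that \emph{in addition} satisfies $F_1'(1)=F_2'(0)$, $F_2'(1)=F_3'(0)$, $F_3'(1)=F_1'(0)$, since by the slope–one affine relation between $h_{j+3}$ and the side coordinates, $u|_{K_{j+3}}\in\mathcal{S}_{K_{j+3}}$ iff the two $C^1$ pieces match in value \emph{and} in first derivative at the vertex. Thus the lemma is equivalent to the density, in the $H^1(0,1)^3$–norm, of this set of $C^1$–triples inside the displayed subspace.

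This density I would prove in two steps. \emph{Step 1 (straightening at the vertices).} Given $(F_1,F_2,F_3)$ in the subspace and $\delta>0$, at each vertex replace the two adjacent end functions by one affine function joining the two outer values; near the vertex where $F_1(1)=F_2(0)$, for instance, read $F_1$ on $[1-\delta,1]$ and $F_2$ on $[0,\delta]$ as a single function in the common coordinate $h_{j+3}$ and replace it by the line through $(1-\delta,F_1(1-\delta))$ and $(\delta,F_2(\delta))$ (so the new common value at the vertex is the average $\tfrac12(F_1(1-\delta)+F_2(\delta))$). The modified triple is continuous, is affine near every endpoint, and — this is the point — now satisfies the first–derivative matching at every vertex \emph{by construction}; moreover it tends to $(F_1,F_2,F_3)$ in $H^1(0,1)^3$ as $\delta\to0$, because the inserted slopes are $O\big(\delta^{-1/2}\|F_i'\|_{L^2(\text{end interval})}\big)$ with $\|F_i'\|_{L^2(\text{end interval})}\to0$, and the resulting $L^2$ errors are controlled by oscillations. \emph{Step 2 (interior mollification).} With $\delta$ fixed, mollify each of the three functions and glue it back to its (already affine, hence smooth) end pieces by a fixed $C^\infty$ cutoff equal to $1$ on $[\delta,1-\delta]$ and to $0$ near $0$ and $1$; the result is a $C^1[0,1]^3$–triple that coincides with the previous one near the endpoints — hence still obeys all six relations — and approximates it in $H^1(0,1)^3$ as the mollification parameter tends to $0$. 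Pulling back through the $h_i$ yields the required element of $\mathcal{S}_{\partial\Omega}$ within $\varepsilon$ of $u$.

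The genuine obstacle is located entirely at the three vertices of the triangle, where $\mathcal{E}_{\partial\Omega}$ "sees" the shifted copies $K_4,K_5,K_6$: one cannot simply approximate $u$ side by side with \cite[Theorem 7]{T08}, since the pieced–together function need not even be continuous across a vertex, and — more to the point — need not be $C^1$ there in the coordinate $h_{j+3}$, hence need not lie in $\mathcal{S}_{K_{j+3}}$. Step 1 is exactly the device that repairs both at once, and it is cheap precisely because the energy and $L^2$ mass of a small cell around a vertex are small. Equivalently, one may argue directly on $\partial\Omega$: first replace $u$ by a function that is $\mathcal{E}_{\partial\Omega}$–harmonic — hence affine in the local harmonic coordinate and in particular $C^1$ across the vertex — on a small cell straddling each vertex, and then invoke density of $\mathcal{S}_{K_i}$ in $\mathcal{D}(\mathcal{E}_{K_i})$ on each side in the version that leaves those cells unchanged; this is the viewpoint of \cite{Mo02,Mo04}.
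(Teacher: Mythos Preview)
Your proof is correct but takes a genuinely different route from the paper's. The paper argues by a partition of unity: it uses the open cover $\{\mathring{K}_i\}_{i=1}^6$, picks for each $i$ a sequence $u_i^{(n)}\in\mathcal{S}_{K_i}$ approximating $u|_{K_i}$ in $\mathcal{D}(\mathcal{E}_{K_i})$ (this is where \cite[Theorem 7]{T08} enters), and glues via an energy-finite partition of unity $\{\chi_i\}$ subordinate to the cover, setting $u^{(n)}=\sum_i\chi_i u_i^{(n)}$; the key estimate is $\mathcal{E}_{K_i}(\chi_i u|_{K_i}-\chi_i u_i^{(n)})^{1/2}\le \mathcal{E}_{K_i}(\chi_i)^{1/2}\|u|_{K_i}-u_i^{(n)}\|_{\sup}+\|\chi_i\|_{\sup}\mathcal{E}_{K_i}(u|_{K_i}-u_i^{(n)})^{1/2}$, which relies on uniform convergence via the resistance estimate. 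The vertex-matching issue you isolate is handled implicitly: the cutoffs $\chi_i$ vanish near the endpoints of $K_i$, so each $\chi_i u_i^{(n)}$ is automatically $C^1$ across the relevant junctions. Your approach, by contrast, transports everything to $H^1(0,1)^3$ via the three non-overlapping copies, characterises $\mathcal{S}_{\partial\Omega}$ explicitly as $C^1$-triples with matching endpoint values \emph{and} derivatives, and then constructs approximants directly by local affine replacement at the vertices followed by interior mollification. The paper's argument is softer and more portable to other glued configurations; yours is more constructive, makes the structure $\mathcal{D}(\mathcal{E}_{\partial\Omega})\cong H^1(S^1)$ transparent, and pinpoints the three vertices as the only genuine obstruction.
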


Here and below we make use of the fact that the collection of open sets $\big\lbrace \mathring{K}_i\big\rbrace_{i=1}^6$ provides an open cover of $\partial\Omega$. 

\begin{proof}
Given $i$ and $u\in\mathcal{D}(\mathcal{E}_{\partial\Omega})$ let $(u_i^{(n)})_n\subset \mathcal{S}_{K_i}$ be a sequence of functions such that  
\[\lim_n\mathcal{E}_{K_i}(u|_{K_i}-u_i^{(n)})=0\ \ \text{ and }\ \ \lim_n \big\|u|_{K_i}-u_i^{(n)}\big\|_{\sup}=0,\] 
by the resistance estimate (\ref{E:resistest}) such a sequence can always be found.
Now let $\left\lbrace \chi_i\right\rbrace_i\subset\mathcal{D}(\mathcal{E}_{\partial\Omega})$ be an energy finite partition of unity subordinate to the cover $\big\lbrace \mathring{K}_i\big\rbrace_i$.
 Extending all $\chi_i u|_{K_i}$ and $\chi_i u_i^{(n)}$ by zero and using \cite[Corollary I.3.3.2]{BH91} also
\begin{align}
\mathcal{E}_{\partial\Omega}(\chi_i u|_{K_i}-\chi_i u_i^{(n)})^{1/2}&=\mathcal{E}_{K_i}(\chi_i u|_{K_i}-\chi_i u_i^{(n)})^{1/2}\notag\\
&\leq \mathcal{E}_{K_i}(\chi_i)^{1/2}\big\|u|_{K_i}-u_i^{(n)}\big\|_{\sup}+\left\|\chi_i\right\|_{\sup}\mathcal{E}_{K_i}(u|_{K_i}-u_i^{(n)})^{1/2}\notag
\end{align}
is seen to tend to zero as $n$ goes to infinity. Consider the function $u^{(n)}:=\sum_{i=1}^6 \chi_i u_i^{(n)}$, clearly an element of $\mathcal{D}(\mathcal{E}_{\partial\Omega})$. Since
\begin{multline}
2\mathcal{E}_{\partial\Omega}(u-u^{(n)})\leq \sum_{i=1}^6\mathcal{E}_{\partial\Omega}(\chi_i u|_{K_i}-\chi_i u_i^{(n)})\notag\\
+\sum_{i=1}^6\sum_{j=i+1}^6 \mathcal{E}_{\partial\Omega}(\chi_i u|_{K_i}-\chi_i u_i^{(n)})^{1/2}\mathcal{E}_{\partial\Omega}(\chi_j u|_{K_j}-\chi_j u_j^{(n)})^{1/2},
\end{multline}
we have $\lim_n \mathcal{E}_{\partial\Omega}(u-u^{(n)})=0$.
\end{proof}

For convenience define $V_\ast(\partial\Omega):=\bigcup_{i=1}^6 V_\ast(K_i)$, where for any $K_i$ the set $V_\ast(K_i)$ is defined in the same way as $V_\ast(K)$ for $K$. 

\begin{lemma}\label{L:agree}
Let $u\in\mathcal{S}_{\partial\Omega}$. Then for any $p\in V_\ast(\partial\Omega)$ the limit 
\begin{equation}\label{E:limit}
D_{\partial\Omega}u(p):=\lim_{V_\ast(\partial\Omega) \ni q\to p}\frac{u(p)-u(q)}{|p-q|^\delta},
\end{equation}
exists uniformly in $p\in V_\ast(\partial\Omega)$. If $p\in K_i$ and $u|_{K_i}=F_i\circ h_i$, then $D_{\partial\Omega}u(p)=F_i'(h_i(p))$.
Moreover, if $p\in K_i\cap K_j\cap V_\ast(\partial\Omega)$ and $u|_{K_j}=F_j\circ h_j$ then $F'_i(h_i(p))=F_j'(h_j(p))$.
\end{lemma}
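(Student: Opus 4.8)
The key observation is that on each copy $K_i$, the coordinate function $h_i = h\circ\varphi_i$ is harmonic with boundary values $0$ and $1$, and by formula (\ref{E:Moscoformula}) — transported via the Euclidean motion $\varphi_i$, which preserves distances — we have $|h_i(p)-h_i(q)| = |p-q|^\delta$ for any $n$-neighbors $p\sim_n q$ in $K_i$. More generally, since $h_i$ is a homeomorphism onto $[0,1]$ realizing $K_i$ as a variational fractal in the sense of \cite{Mo97}, the relation $|h_i(p)-h_i(q)| = |p-q|^\delta$ should hold for \emph{all} $p,q\in V_\ast(K_i)$ (this is the defining feature of the $d_\delta$-metric matching the harmonic coordinate; it follows from the self-similar structure since any two vertices in $V_\ast(K_i)$ are eventually $n$-neighbors within a common cell, and the scaling is compatible). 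So for $u|_{K_i} = F_i\circ h_i$ with $F_i\in C^1(\mathbb{R})$ and $p,q$ both in $K_i$,
\[
\frac{u(p)-u(q)}{|p-q|^\delta} = \frac{F_i(h_i(p))-F_i(h_i(q))}{h_i(p)-h_i(q)}\,,
\]
with the appropriate sign, and as $q\to p$ along $V_\ast(\partial\Omega)$ (so $h_i(q)\to h_i(p)$ by continuity of $h_i$) the right-hand side converges to $F_i'(h_i(p))$ by the mean value theorem; uniformity in $p$ follows from uniform continuity of $F_i'$ on the compact set $[0,1]$.

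First I would fix $p\in V_\ast(\partial\Omega)$ and note that $p$ lies in at least one $\mathring K_i$ unless it is one of the finitely many points where copies meet; in the generic case only one $i$ is relevant and all nearby $q$ also lie in $K_i$, reducing matters to the one-curve computation above. For the boundary junction points $p\in K_i\cap K_j$ the approaching $q$ may lie in either copy, so I would compute the difference quotient separately for $q\in K_i$ and $q\in K_j$; each gives a limit, $F_i'(h_i(p))$ and $F_j'(h_j(p))$ respectively. To conclude the limit (\ref{E:limit}) exists I must show these two one-sided values agree — and this is exactly the last assertion of the lemma, so the two claims are really one. The agreement should follow because near a junction point $p$ the Koch curve $\partial\Omega$ looks, in the resistance/energy metric, like a single interval: the energy form $\mathcal{E}_{\partial\Omega}$ does not "see" the corner, and $u\in\mathcal{S}_{\partial\Omega}\subset\mathcal{D}(\mathcal{E}_{\partial\Omega})$ forces a compatibility of the one-sided derivatives. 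Concretely, I would use that $h_i$ and $h_j$, suitably oriented, both serve as arc-length-type ($d_\delta$) coordinates on a neighborhood of $p$ within $\partial\Omega$, so that $h_j$ as a function of $h_i$ near $h_i(p)$ is (up to sign) the identity plus higher-order terms; differentiating the identity $F_i\circ h_i = F_j\circ h_j = u$ on the overlap and evaluating at $p$ then yields $F_i'(h_i(p)) = \pm F_j'(h_j(p))$, and the orientation conventions fixed on the $\varphi_i$ (the first counterclockwise boundary point maps to $0$) pin down the sign as $+$.

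The main obstacle I anticipate is the junction-point analysis: making rigorous that $|p-q|^\delta$ and $|h_i(p)-h_i(q)|$ remain comparable (indeed equal, in the limit) when $p$ is a shared endpoint and $q$ ranges over $V_\ast(\partial\Omega)$ in \emph{both} copies simultaneously, and extracting from $u\in\mathcal{D}(\mathcal{E}_{\partial\Omega})$ the matching of the two one-sided derivatives. One clean way around this is to observe that the snowflake boundary $\partial\Omega$, like $K$, is itself a variational fractal with a global harmonic-type coordinate on each arc between consecutive junctions, and that the energy-finiteness plus continuity of $u$ across a junction — together with the fact, implicit in Section \ref{enercurve}, that $\mathcal{E}_{\partial\Omega}$ is obtained by gluing without adding any boundary contribution at the junction points (they have zero capacity relative to the $1$-dimensional structure) — already means no extra matching condition is imposed beyond continuity of $u$; but the derivative $D_{\partial\Omega}u$ is a \emph{local} object and the equality $F_i'(h_i(p)) = F_j'(h_j(p))$ at a junction is forced by writing, in a neighborhood of $p$, the relation $h_j = h_i$ (same orientation) which holds because both are the unique harmonic extension from the same boundary data on the relevant subcell, restricted and reparametrized. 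I would present the argument in the order: (1) the single-curve limit via (\ref{E:Moscoformula}) and the mean value theorem, with uniformity; (2) identification $D_{\partial\Omega}u(p) = F_i'(h_i(p))$ for interior points of cells; (3) the junction-point case, reducing to showing $h_i = h_j$ locally and hence the two one-sided derivatives coincide, which simultaneously establishes existence of the limit in (\ref{E:limit}) and the compatibility statement.
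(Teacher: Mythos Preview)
Your computation of the single-curve limit via (\ref{E:Moscoformula}) and the mean value theorem, together with uniformity from the uniform continuity of $F_i'$ on $[0,1]$, matches the paper's argument exactly.

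The divergence is in how you handle the compatibility claim $F_i'(h_i(p))=F_j'(h_j(p))$. You treat the junction points as a genuine obstacle and propose an elaborate argument based on relating $h_i$ and $h_j$ on an overlap, invoking energy-finiteness, orientation conventions, and a putative local identity ``$h_j=h_i$''. This last step does not work as stated: when $p$ is an endpoint shared by, say, $K_1$ and $K_2$, these two copies intersect only at the single point $p$, so there is no open overlap on which to write $h_2$ as a function of $h_1$. Your proposed workaround via the energy form and zero-capacity considerations is also off target, since membership in $\mathcal{D}(\mathcal{E}_{\partial\Omega})$ imposes only continuity at such a point, not matching of one-sided derivatives.

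The paper sidesteps all of this with a one-line observation you overlooked: the six interiors $\{\mathring K_i\}_{i=1}^6$ form an \emph{open cover} of $\partial\Omega$ (this is exactly why six copies are used rather than three). Hence \emph{every} $p\in V_\ast(\partial\Omega)$, including the endpoints of the $K_1,K_2,K_3$ decomposition, lies in some $\mathring K_\ell$, and all nearby $q\in V_\ast(\partial\Omega)$ lie in $K_\ell$. The full limit in (\ref{E:limit}) therefore exists and equals $F_\ell'(h_\ell(p))$ by your own single-curve computation. Now the compatibility is automatic: for any other $i$ with $p\in K_i$, the limit along $q\in K_i\cap V_\ast$ is a sublimit of the full (existing) limit, hence also equals $F_\ell'(h_\ell(p))$; but by the same single-curve computation applied to $K_i$ it equals $F_i'(h_i(p))$. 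This is what the paper means by ``the right hand side in (\ref{E:limit}) does not depend on $i$''. Your ``main obstacle'' is thus illusory, and the correct fix is simply to use the open cover rather than to analyze overlaps or appeal to the Dirichlet-form structure.
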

\begin{proof}
Let $i$ be such that $K_i$ contains $p$ and $F_i\in C^1(\mathbb{R})$ such that $u|_{K_i}=F_i\circ h_i$. Given $\varepsilon>0$, choose $n$ such that $|F_i'(t)-F_i'(s)|<\varepsilon$ provided $|t-s|\leq 4^{-n}$. Then if $q\in K_i\cap V_n$ is such that $h_i(q)<h_i(p)$, we have 
\[\left|F'(h_i(p))-\frac{u(p)-u(q)}{|p-q|^\delta}\right|\leq \frac{1}{|h_i(p)-h_i(q)|}\int_{h_i(q)}^{h_i(p)}|F'(h_i(p))-F'(t)|dt<\varepsilon,\]
and similarly if $h_i(p)<h_i(q)$. The last statement follows because the right hand side in (\ref{E:limit}) does not depend on $i$.
\end{proof}

\begin{definition}
Given $x\in \mathring{K}_i$ and $u\in\mathcal{S}_{\partial\Omega}$ such that $u|_{K_i}=F_i\circ h_i$, we set
\begin{equation}\label{E:tangential}
D_{\partial\Omega}u(x):=F_i'(h_i(x)).
\end{equation}
We refer to $D_{\partial\Omega}u(x)$ as the \emph{tangential gradient} of $u$ along $\partial\Omega$ in $x\in \partial\Omega$.
\end{definition}

\begin{corollary}
The definition (\ref{E:tangential}) is independent of the particular choice of $h_i$ and $F_i$.
\end{corollary}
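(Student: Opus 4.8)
\emph{Proof plan.} The plan is to reduce the well-definedness of \eqref{E:tangential} to Lemma \ref{L:agree} by a continuity-and-density argument. Two kinds of ambiguity have to be ruled out: for a fixed index $i$ the representing function $F_i$ need not be unique, and the point $x$ may lie in $\mathring{K}_i\cap\mathring{K}_j$ for two different indices $i\ne j$ (necessarily one from the decomposition $K_1,K_2,K_3$ and the other from $K_4,K_5,K_6$, since within each triple the interiors are pairwise disjoint), so that $(h_i,F_i)$ and $(h_j,F_j)$ could both be used at $x$.

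First I would handle the choice of $F_i$. Since $h_i=h\circ\varphi_i$ is a homeomorphism of $K_i$ onto $[0,1]$ carrying the two boundary points of $K_i$ to $0$ and $1$, any two functions $F_i,\tilde F_i\in C^1(\mathbb{R})$ with $F_i\circ h_i=\tilde F_i\circ h_i$ on $K_i$ coincide on $[0,1]$, whence $F_i'=\tilde F_i'$ on $(0,1)$; as $h_i(x)\in(0,1)$ for $x\in\mathring{K}_i$, the value $F_i'(h_i(x))$ does not depend on which $F_i$ we pick.

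Next I would treat the choice of $h_i$, i.e. of the index $i$. Suppose $x\in\mathring{K}_i\cap\mathring{K}_j$ with $u|_{K_i}=F_i\circ h_i$ and $u|_{K_j}=F_j\circ h_j$. The maps $x'\mapsto F_i'(h_i(x'))$ and $x'\mapsto F_j'(h_j(x'))$ are continuous on the open set $\mathring{K}_i\cap\mathring{K}_j$, being compositions of the continuous functions $h_i$, $h_j$ with the continuous derivatives $F_i'$, $F_j'$. By the last assertion of Lemma \ref{L:agree} they agree at every point of $V_\ast(\partial\Omega)\cap\mathring{K}_i\cap\mathring{K}_j$, and this set is dense in $\mathring{K}_i\cap\mathring{K}_j$ because $V_\ast(\partial\Omega)$ is dense in $\partial\Omega$ while $\mathring{K}_i\cap\mathring{K}_j$ is open. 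Two continuous functions agreeing on a dense subset are equal, so $F_i'(h_i(x))=F_j'(h_j(x))$, as required; the local definitions then glue to a well-defined continuous function on $\partial\Omega=\bigcup_{i}\mathring{K}_i$.

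The argument is short, and the only step that is not purely formal — hence the main obstacle, modest as it is — is the passage from $V_\ast(\partial\Omega)$ to arbitrary points of $\mathring{K}_i\cap\mathring{K}_j$: one must use that $V_\ast(\partial\Omega)$ meets every nonempty relatively open subset of $\partial\Omega$ and that the two candidate expressions depend continuously on $x'$, so that the pointwise identity provided by Lemma \ref{L:agree} on vertices propagates to the entire overlap.
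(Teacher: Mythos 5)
Your proof is correct and follows essentially the same route as the paper: agreement of $F_i'\circ h_i$ and $F_j'\circ h_j$ at the vertex points of the overlap (the last assertion of Lemma \ref{L:agree}), propagated to all of $\mathring{K}_i\cap\mathring{K}_j$ by continuity and the density of $V_\ast(\partial\Omega)$. Your additional preliminary observation that, for fixed $i$, the representing function $F_i$ is determined on $[0,1]$ because $h_i$ is a homeomorphism onto $[0,1]$ is a small but welcome completion that the paper leaves implicit.
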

\begin{proof}
Assume that $x\in \mathring{K}_i\cap\mathring{K}_j$ and that we have two representations $u|_{K_i}=F_i\circ h_i$ and $u|_{K_j}=F_j\circ h_j$. Let $(p_n)_n\subset V_\ast(\partial\Omega)\cap \mathring{K}_i\cap\mathring{K}_j$ be a sequence with limit $x$. The functions $F_i'\circ h_i$ and $F'_j\circ h_j$ are (uniformly) continuous on $K_i\cap K_j$. Therefore 
\[F'_j(h_j(x))=\lim_n F_j(h_j(p))=\lim_n F_i(h_i(p))=F_i'(h_i(x))\]
by Lemma \ref{L:agree}.
\end{proof}

The tangential gradient can be interpreted as a linear operator.

\begin{corollary}
The tangential gradient $D_{\partial\Omega}$ defines a bounded linear operator $D_{\partial\Omega}:\mathcal{S}_{\partial\Omega}\to L^2(\partial\Omega,\mu)$, more precisely, we have 
\[\left\|D_{\partial\Omega}u\right\|_{L^2(\partial\Omega,\mu)}^2=\mathcal{E}_{\partial\Omega}(u)\ \ \text{ for all }\  u\in\mathcal{S}_{\partial\Omega}.\]
\end{corollary}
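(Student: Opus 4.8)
The plan is to reduce the computation to the three copies $K_1,K_2,K_3$ that carry the reference measure $\mu=\mu_{K_1}+\mu_{K_2}+\mu_{K_3}$, and then to apply the chain rule identity (\ref{E:chain}) on each of them. First I would note that the set $K_i\cap K_j$ for $i\neq j$, $i,j\in\{1,2,3\}$, consists of a single point (a corner of the underlying triangle, which is a common endpoint of the two curves), so that the three $\mathring{K}_i$ with $i\in\{1,2,3\}$ are pairwise disjoint and cover $\partial\Omega$ up to these three points. Since each $\mu_{K_i}$ is non-atomic (it is, up to the factor $\tfrac12$, the normalized self-similar Hausdorff measure on $K_i$), the three corners form a $\mu$-null set. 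Hence $\mu$-almost every $x\in\partial\Omega$ lies in exactly one $\mathring{K}_i$ with $i\in\{1,2,3\}$, and there $D_{\partial\Omega}u(x)=F_i'(h_i(x))$ when $u|_{K_i}=F_i\circ h_i$. On each $\mathring{K}_i$ this is the composition of the continuous function $F_i'$ with the continuous function $h_i$, hence continuous; patching over the finite cover $\{\mathring{K}_i\}_{i=1}^3$ shows that $D_{\partial\Omega}u$ is a well-defined Borel function on $\partial\Omega$ up to a $\mu$-null set. That the value does not depend on the chosen representation, even on overlaps with the copies $K_4,K_5,K_6$, is exactly the content of the preceding corollary, so there is no ambiguity.

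Next I would carry out the computation. Since $\mu$ is the sum of $\mu_{K_1},\mu_{K_2},\mu_{K_3}$ and each $\mu_{K_i}$ is supported on $K_i$,
\[
\left\|D_{\partial\Omega}u\right\|_{L^2(\partial\Omega,\mu)}^2=\sum_{i=1}^3\int_{K_i}\big(F_i'(h_i(x))\big)^2\,\mu_{K_i}(dx),
\]
where for each $i$ we write $u|_{K_i}=F_i\circ h_i$ and use that the two endpoints of $K_i$ are $\mu_{K_i}$-null. Under the identification of $K_i$ with $K$ via the Euclidean motion $\varphi_i$ (which carries $h_i=h\circ\varphi_i$ to $h$, $\mu_{K_i}$ to $\mu_K$, and $\mathcal{E}_{K_i}$ to $\mathcal{E}_K$, as set up in Section~\ref{enercurve}), each summand equals $\int_K\big(F_i'(h(y))\big)^2\,\mu_K(dy)=\mathcal{E}_K(F_i\circ h)=\mathcal{E}_{K_i}(u|_{K_i})$ by the chain rule (\ref{E:chain}). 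Summing and invoking the definition (\ref{E:energysnow}) gives $\left\|D_{\partial\Omega}u\right\|_{L^2(\partial\Omega,\mu)}^2=\sum_{i=1}^3\mathcal{E}_{K_i}(u|_{K_i})=\mathcal{E}_{\partial\Omega}(u)$, which is the asserted identity; in particular $D_{\partial\Omega}u\in L^2(\partial\Omega,\mu)$ for every $u\in\mathcal{S}_{\partial\Omega}$.

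Linearity of $D_{\partial\Omega}$ on $\mathcal{S}_{\partial\Omega}$ is immediate: $\mathcal{S}_{\partial\Omega}$ is a vector space, and if $u|_{K_i}=F_i\circ h_i$ and $v|_{K_i}=G_i\circ h_i$, then $(\alpha u+\beta v)|_{K_i}=(\alpha F_i+\beta G_i)\circ h_i$, so $D_{\partial\Omega}(\alpha u+\beta v)=\alpha\,F_i'\circ h_i+\beta\,G_i'\circ h_i=\alpha\,D_{\partial\Omega}u+\beta\,D_{\partial\Omega}v$ on $\mathring{K}_i$, hence $\mu$-a.e. Boundedness then follows at once from the just-proved isometry together with $\mathcal{E}_{\partial\Omega}(u)\le\|u\|_{\mathcal{D}(\mathcal{E}_{\partial\Omega})}^2$, $\mathcal{S}_{\partial\Omega}$ being equipped with the graph norm of $\mathcal{E}_{\partial\Omega}$.

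As to difficulty, the argument is essentially bookkeeping once the chain rule (\ref{E:chain}) is available, and the only points that need a line of care are: (i) the $\mu$-null exceptional set — the triangle corners, and \emph{a priori} also the overlaps with $K_4,K_5,K_6$ — causes no trouble, which uses non-atomicity of the $\mu_{K_i}$ together with the preceding corollary; and (ii) the energy, the energy measure, and the harmonic coordinate all transport correctly along the motions $\varphi_i$, which was already arranged in Section~\ref{enercurve}. I do not anticipate a genuine obstacle.
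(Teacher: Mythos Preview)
Your proposal is correct and follows essentially the same approach as the paper: both reduce to the three copies $K_1,K_2,K_3$, apply the chain rule (\ref{E:chain}) on each to get $\mathcal{E}_{K_i}(u|_{K_i})=\int_{K_i}(F_i'(h_i(x)))^2\,\mu_{K_i}(dx)$, and sum via (\ref{E:energysnow}). The paper's proof is a two-line version of yours; the extra care you take with the $\mu$-null corner set, the well-definedness across overlaps, and the explicit linearity check are all fine but are left implicit in the paper.
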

\begin{proof}
Suppose $F_i\in C^1(\mathbb{R})$, $i=1,2,3$ are such that $u|_{K_i}=F_i\circ h_i$. Then by (\ref{E:chain}) and (\ref{E:energysnow}) we have
\[\mathcal{E}_{\partial\Omega}(u)=\int_{K_1} (F'_1(h_1(x)))^2\mu_{K_1}(dx)+\int_{K_2} (F'_2(h_2(x)))^2\mu_{K_2}(dx)+\int_{K_3} (F'_3(h_3(x)))^2\mu_{K_3}(dx),\]
what yields the result.
\end{proof}

The closedness of $(\mathcal{E}_{\partial\Omega},\mathcal{D}(\mathcal{E}_{\partial\Omega}))$ implies the following.

\begin{proposition}\label{P:closedop}
The operator $D_{\partial\Omega}$ extends to a closed unbounded operator $D_{\partial\Omega}:L^2(\partial\Omega,\mu)\to L^2(\partial\Omega,\mu)$ with domain $\mathcal{D}(\mathcal{E}_{\partial\Omega})$. Moreover, for any $u\in \mathcal{D}(\mathcal{E}_{\partial\Omega})$ we have 
\[\mathcal{E}_{\partial\Omega}(u)=\left\|D_{\partial\Omega}u\right\|_{L^2(\partial\Omega,\mu)}^2\]
and the energy measure $\nu_u$ of $u$ is absolutely continuous with respect to $\mu$ with density
\begin{equation}\label{E:Gammadomega}
\Gamma_{\partial\Omega}(u)=(D_{\partial\Omega}u)^2\ \ \text{$\mu$-a.e.}
\end{equation}
\end{proposition}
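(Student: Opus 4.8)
The plan is to deduce Proposition~\ref{P:closedop} from the density statement of the first lemma together with the isometry identity $\|D_{\partial\Omega}u\|^2_{L^2(\partial\Omega,\mu)}=\mathcal{E}_{\partial\Omega}(u)$ already established on the core $\mathcal{S}_{\partial\Omega}$, and the closedness of $(\mathcal{E}_{\partial\Omega},\mathcal{D}(\mathcal{E}_{\partial\Omega}))$. First I would observe that since $D_{\partial\Omega}\colon\mathcal{S}_{\partial\Omega}\to L^2(\partial\Omega,\mu)$ satisfies $\|D_{\partial\Omega}u\|^2_{L^2}=\mathcal{E}_{\partial\Omega}(u)\le\mathcal{E}_{\partial\Omega}(u)+\|u\|^2_{L^2}=\|u\|^2_{\mathcal{D}(\mathcal{E}_{\partial\Omega})}$, the map $D_{\partial\Omega}$ is bounded from $(\mathcal{S}_{\partial\Omega},\|\cdot\|_{\mathcal{D}(\mathcal{E}_{\partial\Omega})})$ into $L^2(\partial\Omega,\mu)$; as $\mathcal{S}_{\partial\Omega}$ is dense in $\mathcal{D}(\mathcal{E}_{\partial\Omega})$ in the form norm, it extends uniquely to a bounded operator $\mathcal{D}(\mathcal{E}_{\partial\Omega})\to L^2(\partial\Omega,\mu)$, and passing to the limit in the isometry identity gives $\mathcal{E}_{\partial\Omega}(u)=\|D_{\partial\Omega}u\|^2_{L^2(\partial\Omega,\mu)}$ for every $u\in\mathcal{D}(\mathcal{E}_{\partial\Omega})$. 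Regarding $L^2(\partial\Omega,\mu)$ as the base Hilbert space and $D_{\partial\Omega}$ as defined on the dense subspace $\mathcal{D}(\mathcal{E}_{\partial\Omega})$, the closedness of $(\mathcal{E}_{\partial\Omega},\mathcal{D}(\mathcal{E}_{\partial\Omega}))$ means exactly that $\mathcal{D}(\mathcal{E}_{\partial\Omega})$ is complete in the norm $(\|\cdot\|_{L^2}^2+\mathcal{E}_{\partial\Omega}(\cdot))^{1/2}=(\|\cdot\|_{L^2}^2+\|D_{\partial\Omega}\cdot\|_{L^2}^2)^{1/2}$, which is precisely the graph-norm completeness of $D_{\partial\Omega}$; hence $D_{\partial\Omega}$ is a closed operator on $L^2(\partial\Omega,\mu)$.

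It remains to identify the energy measure: I want to show $\nu_u\ll\mu$ with $\tfrac{d\nu_u}{d\mu}=(D_{\partial\Omega}u)^2$. For $u=F_i\circ h_i$ on each $\mathring K_i$, the chain rule (\ref{E:chain}) applied on each Koch copy together with (\ref{E:energysnow}) expresses $\mathcal{E}_{\partial\Omega}(u)$ as $\sum_{i=1,2,3}\int_{K_i}(F_i'\circ h_i)^2\,d\mu_{K_i}=\int_{\partial\Omega}(D_{\partial\Omega}u)^2\,d\mu$. To upgrade this energy identity to a measure identity, I would localize: for any open $U\subset\partial\Omega$ and any $\varphi\in\mathcal{S}_{\partial\Omega}$ with $0\le\varphi\le 1$, apply the same chain rule computation to $\mathcal{E}_{\partial\Omega}(u,\varphi u)-\tfrac12\mathcal{E}_{\partial\Omega}(\varphi,u^2)=\int_{\partial\Omega}\varphi\,d\nu_u$ (the Beurling–Deny / Leibniz-type formula defining the energy measure, analogous to (\ref{E:BH})), and note that on each $K_i$ this equals $\int_{K_i}\varphi\,(F_i'\circ h_i)^2\,d\mu_{K_i}$ by the chain rule for energy measures. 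Since $\mathcal{S}_{\partial\Omega}$ separates points and is dense, the resulting equality of the two Radon measures $\int\varphi\,d\nu_u=\int\varphi(D_{\partial\Omega}u)^2\,d\mu$ on a sufficiently rich class of test functions forces $\nu_u=(D_{\partial\Omega}u)^2\,\mu$; this gives (\ref{E:Gammadomega}) first for $u\in\mathcal{S}_{\partial\Omega}$ and then, by the density argument and the continuity of $u\mapsto\nu_u$ in total variation under form convergence, for all $u\in\mathcal{D}(\mathcal{E}_{\partial\Omega})$.

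The main obstacle I anticipate is the last localization step: passing from the global energy identity $\mathcal{E}_{\partial\Omega}(u)=\int(D_{\partial\Omega}u)^2\,d\mu$ to the pointwise $\mu$-a.e.\ identity of densities requires knowing that the chain rule for energy measures (not just energies) holds on each Koch copy, i.e.\ $\nu_{F\circ h}=(F'\circ h)^2\,\nu_h$ with $\nu_h=\tfrac12\mu_K$; this is standard for strongly local regular Dirichlet forms (see \cite[Theorem 3.2.2]{FOT94} and the derivation of (\ref{E:chain})), but one must also handle the additivity of energy measures under the decomposition $\partial\Omega=K_1\cup K_2\cup K_3$, in particular checking that the overlap set $V_\ast(\partial\Omega)\cap(K_i\cap K_j)$, being finite hence $\mu$-null, contributes nothing. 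A secondary technical point is the density of $\mathcal{S}_{\partial\Omega}$-type test functions in the partition-of-unity argument, but this is already supplied by the first lemma of this section and \cite[Corollary I.3.3.2]{BH91}, so it should go through without difficulty.
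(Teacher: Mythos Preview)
Your proof is correct and follows essentially the same route as the paper. Both obtain the extension of $D_{\partial\Omega}$ and the energy identity directly from density of $\mathcal{S}_{\partial\Omega}$ plus closedness of $(\mathcal{E}_{\partial\Omega},\mathcal{D}(\mathcal{E}_{\partial\Omega}))$, and both first establish (\ref{E:Gammadomega}) on $\mathcal{S}_{\partial\Omega}$ via the chain rule for energy measures and then pass to the limit.

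The only difference is in how the extension of (\ref{E:Gammadomega}) to general $u\in\mathcal{D}(\mathcal{E}_{\partial\Omega})$ is executed. You invoke the abstract continuity of $u\mapsto\nu_u$ in total variation under form convergence; the paper instead carries out this convergence by hand, using the elementary pointwise inequality $|\Gamma_{\partial\Omega}(u_n)^{1/2}-\Gamma_{\partial\Omega}(u_m)^{1/2}|\le\Gamma_{\partial\Omega}(u_n-u_m)^{1/2}$ together with Cauchy--Schwarz to show that $(D_{\partial\Omega}u_n)^2$ is Cauchy in $L^1(\partial\Omega,\mu)$ whenever $u_n\to u$ in $\mathcal{D}(\mathcal{E}_{\partial\Omega})$. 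Your localization via the Beurling--Deny formula $\int\varphi\,d\nu_u=\mathcal{E}_{\partial\Omega}(u,\varphi u)-\tfrac12\mathcal{E}_{\partial\Omega}(\varphi,u^2)$ is a perfectly valid way to identify the measure on $\mathcal{S}_{\partial\Omega}$ (and indeed is implicitly what the paper means by ``immediate''), while the paper's direct $L^1$-Cauchy estimate is slightly more self-contained and avoids quoting the total-variation continuity as a black box. The obstacles you anticipate (measure-level chain rule on each $K_i$, null overlap) are real but routine, exactly as you suggest.
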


\begin{proof}
The first statement and the identity for the energy follow from the closedness of $(\mathcal{E}_{\partial\Omega},\mathcal{D}(\mathcal{E}_{\partial\Omega}))$. The last identity is immediate for all functions from $\mathcal{S}_{\partial\Omega}$. By polarization we obtain a bilinear map $(u,v)\mapsto \Gamma_{\partial\Omega}(u,v)=D_{\partial\Omega}uD_{\partial\Omega}v$ on $\mathcal{S}_{\partial\Omega}$. If now $(u_n)_n\subset \mathcal{S}_{\partial\Omega}$ is a sequence converging to $u\in\mathcal{D}(\mathcal{E}_{\partial\Omega})$ in $\mathcal{D}(\mathcal{E}_{\partial\Omega})$, then the limit
\[\lim_n \Gamma_{\partial\Omega}(u_n)=\Gamma_{\partial\Omega}(u)\]
exists in $L^1(\partial\Omega,\mu)$, as follows from $|\Gamma_{\partial\Omega}(u_n)^{1/2}-\Gamma_{\partial\Omega}(u_m)^{1/2}|\leq \Gamma_{\partial\Omega}(u_n-u_m)^{1/2}$ (which is a simple consequence of the bilinearity of $\Gamma_{\partial\Omega}$) together with the straightforward estimate
\[\int_{\partial\Omega}|\Gamma_{\partial\Omega}(u_n)-\Gamma_{\partial\Omega}(u_m)|d\mu\leq \left(\int_{\partial\Omega}
|\Gamma_{\partial\Omega}(u_n)^{1/2}-\Gamma_{\partial\Omega}(u_m)^{1/2}|^2d\mu\right)^{1/2}
(8\sup_n\mathcal{E}_{\partial\Omega}(u_n))^{1/2}.\]
\end{proof}

\begin{remark}\mbox{}
\begin{enumerate}
\item[(i)] According to general theory the adjoint $D_{\partial\Omega}^\ast$ of $D_{\partial\Omega}$ is a densely defined and closed linear operator from $L^2(\partial\Omega,\mu)$ into itself. The image $\im\:D_{\partial\Omega}$ of $D_{\partial\Omega}$ is a closed subspace of $L^2(\partial\Omega,\mu)$, see \cite[Section 4]{HKT15}, and we observe the orthogonal decomposition $L^2(\partial\Omega,\mu)=\im D_{\partial\Omega}\oplus \ker D_{\partial\Omega}^\ast$. Since $\partial\Omega$ is finitely ramified, it follows from \cite[Theorem 5.6]{IRT} that $\ker D_{\partial\Omega}^\ast$ is one-dimensional. The snowflake boundary $\partial\Omega$ is homeomorphic to the unit circle, and as in the case of the classical energy form on the unit circle $\ker D_{\partial\Omega}^\ast$ consists of the constant functions.
\item[(ii)] As already mentioned in the introduction, an analog of $D_{\partial\Omega}$, namely an (abstract) gradient, can be defined for any Dirichlet form, \cite{CS03}, and for many energy forms on fractals it admits very intuitive discrete approximations \cite{CS09, IRT, Ki93, Ku93, Mo02, Mo04, Str00, T00, T08}. In general such gradients take functions into (abstract) generalized $L^2$-vector fields, see for instance \cite{HT15a}. This is already sufficient to study partial differential equations on a certain functional analytic level, and we would like to emphasize that for many purposes it is not even be needed that energy measures are absolutely continuous. The specific (and by no means general) feature of the present situation is that the gradient of a function can again be seen as a function.
\end{enumerate}
\end{remark}

\section{Quadratic forms, semigroups and Cauchy problems}\label{S:Cauchy}

Now consider the function space 
\[V(\Omega,\partial\Omega):=\left\lbrace u\in H^1(\Omega): u|_F\in\mathcal{D}(\mathcal{E}_{\partial\Omega})\right\rbrace.\]
This space is well defined and non-empty, see e.g. \cite[Propositions 4.8 and 4.9]{La02} or \cite{La03}, and equipped with the scalar product
\begin{equation}\label{E:Hilbert}
\left\langle u,v\right\rangle_{V(\Omega,\partial\Omega)}=\left\langle u,v\right\rangle_{H^1(\Omega)}+\mathcal{E}_{\partial\Omega}(u|_{\partial\Omega}, v|_{\partial\Omega})+\left\langle u|_{\partial\Omega},v|_{\partial\Omega}\right\rangle_{L^2(\partial\Omega,\mu)},
\end{equation}
it is a Hilbert space, \cite[Proposition 3.2]{LaV14}. The last statement is a consequence of known trace and extension results, \cite{JW84, Tri97, W91}, a short survey is provided in Appendix \ref{App:A}.

Let
\[m=\mathcal{L}^2|_{\Omega}+\mu\]
denote the sum of the measures $\mathcal{L}^2|_{\Omega}$ and $\mu$. For any $c_0>0$
we can define a quadratic form $(\mathcal{E}_0, V(\Omega,\partial\Omega))$ on $L^2(\overline{\Omega}, m)$ by 
\[\mathcal{E}_0(u):=\int_\Omega (\nabla u)^2 d\mathcal{L}^2+c_0\mathcal{E}_{\partial\Omega}(u|_{\partial\Omega}),\ \ u\in V(\Omega,\partial\Omega), \]
and its polarized version 
\begin{equation}\label{E:unperturbed}
\mathcal{E}_0(u,v):=\int_\Omega \nabla u\cdot\nabla v\:d\mathcal{L}^2+c_0\mathcal{E}_{\partial\Omega}(u|_{\partial\Omega}, v|_{\partial\Omega}), \ \ u,v\in V(\Omega,\partial\Omega),
\end{equation}
yields a Dirichlet form $(\mathcal{E}_0, V(\Omega,\partial\Omega))$ on $L^2(\overline{\Omega}, m)$.

\begin{assumption}\label{A:conditions}
Now let us assume we are given the following data:
\begin{itemize}
\item[(i)] a coefficient matrix $\mathcal{A}=(a_{ij})_{i,j=1}^2$ that is symmetric, $a_{ji}=a_{ij}$, has bounded entries $a_{ij}\in L^\infty(\Omega)$, and satisfies the ellipticity condition
\[\sum_{i,j=1}^2 a_{ij}\xi_i\xi_j\geq \lambda \sum_{i=1}^2|\xi_i|^2,\ \ \xi=(\xi_1,\xi_2)\in\Omega,\]
with a constant $\lambda>0$,
\item[(ii)] a vector field $\vec{b}=(b_1,b_2)\in L^2(\Omega,\mathbb{R}^2)$ such that 
\[\int_{\Omega}u^2 (\vec{b}\cdot\vec{b})\:d\mathcal{L}^2\leq\gamma_1\int_\Omega (\nabla u)^2d\mathcal{L}^2+\gamma_2\big\|u\big\|_{L^2(\Omega)}^2, \ \ u\in V(\Omega,\partial\Omega),\]
with positive constants $\gamma_1$ and $\gamma_2$ such that $\sqrt{2\gamma_1}<\lambda$,
\item[(iii)] a 'boundary vector field' $b_{\partial\Omega}\in L^2(\partial\Omega,\mu)$ such that 
\[\int_{\partial\Omega} u^2b_{\partial\Omega}^2\:d\mu\leq \delta_1\mathcal{E}_{\partial\Omega}(u|_{\partial\Omega})+\delta_2\left\|u\right\|_{L^2(\partial\Omega,\mu)}^2, \ \ u\in V(\Omega,\partial\Omega),\]
with positive constants $\delta_1$ and $\delta_2$ such that $\sqrt{2\delta_1}<c_0$,
\item[(iv)] a continuous function $c$ on $\partial\Omega$.
\end{itemize}
\end{assumption}

Given data as in Assumption \ref{A:conditions} consider the bilinear form defined by
\begin{multline}
\mathcal{E}(u,v)=\int_{\Omega}(\mathcal{A}(x)\cdot\nabla u(x))\cdot\nabla v(x)\mathcal{L}^2(dx)-\int_{\Omega}(\vec{b}(x)\cdot\nabla u(x))v(x)\mathcal{L}^2(dx)\notag\\
+c_0\mathcal{E}_{\partial\Omega}(u,v)-\int_{\partial\Omega}b_{\partial\Omega}(x)D_{\partial\Omega}u(x)v(x)\mu(dx)+\int_{\partial\Omega}c(x)u(x)v(x)m(dx),
\end{multline}
$u,v\in V(\Omega,\partial\Omega)$. Given $\alpha >0$ we write 
\[\mathcal{E}_\alpha(u,v):=\mathcal{E}(u,v)+\alpha\left\langle u,v\right\rangle_{L^2(\overline{\Omega},m)}.\]

\begin{proposition}
Let $\mathcal{A}$, $\vec{b}$, $b_{\partial\Omega}$ and $c$ be as in Assumption \ref{A:conditions}. Then $(\mathcal{E}, V(\Omega,\partial\Omega))$ is a closed coercive form in the wide sense, i.e. there is some $\alpha>0$ such that $u\mapsto \mathcal{E}_\alpha(u,u)$ defines a positive definite closed quadratic form on $L^2(\overline{\Omega},m)$ with domain $V(\Omega,\partial\Omega)$ and
\begin{equation}\label{E:sector}
|\mathcal{E}_{\alpha+1}(u,v)|\leq K\:\mathcal{E}_{\alpha+1}(u)^{1/2}\mathcal{E}_{\alpha+1}(v)^{1/2},\ \ u,v\in V(\Omega,\partial\Omega),
\end{equation}
with a universal constant $K>0$.
\end{proposition}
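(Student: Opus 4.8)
The plan is to verify the two requirements for a closed coercive form in the wide sense (in the sense of Ma--Röckner, \cite{MR}) separately: the sector condition (\ref{E:sector}) on the symmetric-plus-antisymmetric decomposition, and the existence of $\alpha>0$ making $\mathcal{E}_\alpha$ positive definite and closed. I would write $\mathcal{E}=\mathcal{E}^{\mathrm{sym}}+\mathcal{E}^{\mathrm{asym}}$, where the symmetric part consists of the two Dirichlet-form pieces $\int_\Omega(\mathcal{A}\nabla u)\cdot\nabla v\,d\mathcal{L}^2+c_0\mathcal{E}_{\partial\Omega}(u,v)$ together with the zeroth-order term $\int_{\partial\Omega}c\,uv\,dm$, and the antisymmetric part collects the two first-order drift terms $-\int_\Omega(\vec b\cdot\nabla u)v\,d\mathcal{L}^2$ and $-\int_{\partial\Omega}b_{\partial\Omega}D_{\partial\Omega}u\,v\,d\mu$. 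Strictly speaking these drift terms are not antisymmetric, but their antisymmetric parts are what matter and their symmetric parts are lower-order; the standard trick is to estimate each drift term directly rather than symmetrize.

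For the coercivity estimate, the key is to control the two drift terms by the ellipticity. Using Cauchy--Schwarz, for the interior term
\[
\left|\int_\Omega(\vec b\cdot\nabla u)u\,d\mathcal{L}^2\right|\le \left(\int_\Omega u^2(\vec b\cdot\vec b)\,d\mathcal{L}^2\right)^{1/2}\left(\int_\Omega(\nabla u)^2\,d\mathcal{L}^2\right)^{1/2},
\]
and then inserting Assumption \ref{A:conditions}(ii) and the elementary inequality $(pq)^{1/2}\le \tfrac{\varepsilon}{2}p+\tfrac{1}{2\varepsilon}q$ one bounds this by $\tfrac{\varepsilon}{2}(\gamma_1\|\nabla u\|_{L^2}^2+\gamma_2\|u\|_{L^2}^2)+\tfrac{1}{2\varepsilon}\|\nabla u\|_{L^2}^2$. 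Choosing $\varepsilon$ appropriately and using the hypothesis $\sqrt{2\gamma_1}<\lambda$ leaves a strictly positive fraction of $\lambda\|\nabla u\|_{L^2}^2$ after subtraction, at the cost of an $\alpha_1\|u\|_{L^2}^2$ term. The boundary drift is handled identically, now writing $D_{\partial\Omega}u\cdot u$ and using Proposition \ref{P:closedop} to identify $\|D_{\partial\Omega}u\|_{L^2(\partial\Omega,\mu)}^2=\mathcal{E}_{\partial\Omega}(u)$, Cauchy--Schwarz, Assumption \ref{A:conditions}(iii), and $\sqrt{2\delta_1}<c_0$; this leaves a positive fraction of $c_0\,\mathcal{E}_{\partial\Omega}(u)$ at the cost of an $\alpha_2\|u\|_{L^2(\partial\Omega,\mu)}^2$ term. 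The potential term contributes $\pm\|c\|_\infty\|u\|_{L^2(\partial\Omega,\mu)}^2$. Absorbing everything into $\alpha\langle u,u\rangle_{L^2(\overline\Omega,m)}$ with $\alpha$ large enough, one gets $\mathcal{E}_\alpha(u,u)\ge \theta\,\mathcal{E}_0(u,u)$ for some $\theta>0$, which gives positive definiteness; closedness then follows because this two-sided comparability of $\mathcal{E}_\alpha(\cdot)+\|\cdot\|_{L^2}^2$ with the $V(\Omega,\partial\Omega)$-norm (\ref{E:Hilbert}) — which is what the lower bound together with the obvious upper bound $|\mathcal{E}_0(u)|\lesssim\|u\|_{V(\Omega,\partial\Omega)}^2$ and the drift upper bounds give — transfers completeness of $(V(\Omega,\partial\Omega),\langle\cdot,\cdot\rangle_{V(\Omega,\partial\Omega)})$ to closedness of $\mathcal{E}_\alpha$.

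For the sector condition (\ref{E:sector}), one estimates $|\mathcal{E}_{\alpha+1}(u,v)|$ term by term against $\mathcal{E}_{\alpha+1}(u)^{1/2}\mathcal{E}_{\alpha+1}(v)^{1/2}$. The symmetric Dirichlet-form pieces and the $L^2$-inner product are trivially bounded by $\mathcal{E}_{\alpha+1}(u)^{1/2}\mathcal{E}_{\alpha+1}(v)^{1/2}$ with constant $1$ (using ellipticity upward-boundedness of $\mathcal{A}$ and $|c|\le\|c\|_\infty$), and the potential term similarly. For the interior drift, Cauchy--Schwarz gives $|\int_\Omega(\vec b\cdot\nabla u)v\,d\mathcal{L}^2|\le \|\nabla u\|_{L^2(\Omega)}\,(\int_\Omega v^2(\vec b\cdot\vec b)\,d\mathcal{L}^2)^{1/2}$; by Assumption \ref{A:conditions}(ii) the second factor is $\le(\gamma_1\|\nabla v\|_{L^2}^2+\gamma_2\|v\|_{L^2}^2)^{1/2}$, which after invoking the already-established comparability $\mathcal{E}_0(v)+\|v\|_{L^2}^2\asymp\|v\|_{V(\Omega,\partial\Omega)}^2$ (equivalently $\asymp\mathcal{E}_{\alpha+1}(v)$) is bounded by a constant times $\mathcal{E}_{\alpha+1}(v)^{1/2}$; likewise $\|\nabla u\|_{L^2(\Omega)}\le\lambda^{-1/2}\mathcal{E}_0(u)^{1/2}\le C\,\mathcal{E}_{\alpha+1}(u)^{1/2}$. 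The boundary drift is treated the same way using $\|D_{\partial\Omega}u\|_{L^2(\partial\Omega,\mu)}^2=\mathcal{E}_{\partial\Omega}(u)\le c_0^{-1}\mathcal{E}_0(u)$ and Assumption \ref{A:conditions}(iii). Summing the finitely many contributions yields (\ref{E:sector}) with a universal $K$. The main obstacle is organizational rather than deep: one must establish the norm-comparability $\mathcal{E}_{\alpha+1}(u)\asymp\|u\|_{V(\Omega,\partial\Omega)}^2$ first (from the coercivity step), since both the closedness argument and the sector estimate for the drift terms rely on replacing $\mathcal{E}_{\alpha+1}$-quantities by the full $V(\Omega,\partial\Omega)$-norm and back; care is also needed that the constants $\gamma_1,\delta_1$ enter the coercivity bound through the sharp threshold inequalities $\sqrt{2\gamma_1}<\lambda$, $\sqrt{2\delta_1}<c_0$, so the choice of the auxiliary parameter $\varepsilon$ in Young's inequality must be made explicitly (e.g. $\varepsilon$ close to $\lambda/\gamma_1$, respectively $c_0/\delta_1$) to see that a genuinely positive remainder survives.
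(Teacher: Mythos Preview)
Your proposal is correct and follows essentially the same route as the paper: estimate each drift term via Cauchy--Schwarz and Young's inequality, use the thresholds $\sqrt{2\gamma_1}<\lambda$ and $\sqrt{2\delta_1}<c_0$ to retain a positive fraction of the symmetric principal part, and then observe that the sector condition is a straightforward term-by-term Cauchy--Schwarz estimate. The only cosmetic difference is packaging: the paper compares the symmetric part $\widetilde{\mathcal{E}}$ of $\mathcal{E}$ to the reference Dirichlet form $\mathcal{E}_{\mathcal{A}}$ (with the matrix $\mathcal{A}$ built in), obtains a relative form bound $|\widetilde{\mathcal{E}}(u,u)-\mathcal{E}_{\mathcal{A}}(u,u)|\le\varepsilon_0\,\mathcal{E}_{\mathcal{A}}(u,u)+C\|u\|_{L^2}^2$ with $\varepsilon_0<1$, and then invokes the KLMN theorem \cite[Theorem~X.17]{RS80} to conclude closedness and lower boundedness in one stroke, whereas you spell out the same conclusion by hand via norm comparability with the Hilbert norm~(\ref{E:Hilbert}) on $V(\Omega,\partial\Omega)$.
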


\begin{proof} By (i) the form
\begin{equation}\label{E:EA}
\mathcal{E}_{\mathcal{A}}(u,v):=\int_{\Omega}\mathcal{A}\cdot\nabla u\cdot\nabla v\:d\mathcal{L}^2+c_0\mathcal{E}_{\partial\Omega}(u|_{\partial\Omega}, v|_{\partial\Omega})
\end{equation}
defines a symmetric Dirichlet form $(\mathcal{E}_{\mathcal{A}}, V(\Omega,\partial\Omega))$ on $L^2(\overline{\Omega},m)$. Conditions (ii) and (iii) are form boundedness conditions, cf. \cite{RS80}. A straightforward calculation using (ii) shows that for any $\varepsilon>0$ we have
\begin{equation}\label{E:formbound}
\left|\int_{\Omega}(\vec{b}\cdot \nabla u)u\:d\mathcal{L}^2\right|\leq \frac{\sqrt{2\gamma_1}+\varepsilon^2}{\lambda}\int_{\Omega} (\mathcal{A}\cdot\nabla u)\cdot\nabla u\:d\mathcal{L}^2+\frac{2\gamma_2}{\varepsilon^2}\left\|u\right\|_{L^2(\overline{\Omega},m)}^2,
\end{equation}
and by (ii) we can find a small $\varepsilon$ such that $\sqrt{2\gamma_1}+\varepsilon^2<\lambda$. A similar bound follows from (iii), and we can find some $0<\varepsilon_0<1$ and $C>0$ such that 
\[|\widetilde{\mathcal{E}}(u,u)-\mathcal{E}_{\mathcal{A}}(u,u)|\leq \varepsilon_0\mathcal{E}_{\mathcal{A}}(u,u)+C\left\|u\right\|_{L^2(\overline{\Omega},m)}^2\]
for any $u\in V(\Omega, \partial\Omega)$, where $\widetilde{\mathcal{E}}(u,v)=\frac12(\mathcal{E}(u,v)+\mathcal{E}(v,u))$ denotes the symmetric part of $\mathcal{E}$. By \cite[Theorem X.17]{RS80} the symmetric form $(\widetilde{\mathcal{E}}, V(\Omega,\partial\Omega))$ is lower bounded and closed.
The sector condition (\ref{E:sector}) is straightforward.
\end{proof}

As a consequence $(\mathcal{E}, V(\Omega,\partial\Omega))$ generates an analytic semigroup $(T_t)_{t\geq 0}$ on $L^2(\overline{\Omega}, m)$ which satisfies \[\left\|T_t \right\|_{L^2(\overline{\Omega},m)\to L^2(\overline{\Omega},m)}\leq e^{\alpha t},\ \ t\geq 0.\] 
See for instance \cite[Corollary I.2.21]{MR92}.

Let $(A,\mathcal{D}(A))$ denote the $L^2(\overline{\Omega},m)$-generator of the form $(\mathcal{E},V(\Omega,\partial\Omega))$. Given $T>0$ and functions $f:[0,T]\to L^2(\overline{\Omega},m)$ and $u_0\in L^2(\overline{\Omega},m)$,
we consider the abstract Cauchy problem 
\begin{equation}\label{E:Cauchy}
\begin{cases}
\frac{du(t)}{dt}=Au(t)+f(t),\ \ 0< t\leq T,\\
u(0)=u_0
\end{cases}
\end{equation}
in $L^2(\overline{\Omega},m)$. A function $u\in C([0,T], L^2(\overline{\Omega},m))$ is called a classical solution to (\ref{E:Cauchy}) if $u\in C^1((0,T], L^2(\overline{\Omega},m))\cap C((0,T],\mathcal{D}(A))$ and $u$ satisfies (\ref{E:Cauchy}). Under Assumptions \ref{A:conditions} and a H\"older condition on $f$ the existence and uniqueness of a classical solution to (\ref{E:Cauchy}) follows from \cite[Theorem 4.3.1]{Lu95}.

\begin{theorem}\label{T:strictsol}
Suppose $0<\theta<1$ and $f\in C^\theta((0,T], L^2(\overline{\Omega},m))$ and that Assumption \ref{A:conditions} is satisfied. Then (\ref{E:Cauchy}) has a unique classical solution $u$, given by
\[u(t)=T_tu_0+\int_0^tT_{t-s}f(s)ds,\ \ 0\leq t\leq T.\]
\end{theorem}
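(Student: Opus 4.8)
The plan is to verify that the abstract Cauchy problem (\ref{E:Cauchy}) falls precisely into the framework of \cite[Theorem 4.3.1]{Lu95} (existence and uniqueness of a classical solution for $\dot u = Au+f$ when $A$ generates an analytic semigroup and $f$ is H\"older continuous on $(0,T]$), and then to read off the Duhamel representation. So the proof is essentially a bookkeeping argument checking hypotheses that have already been assembled in the preceding discussion.

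\textbf{Key steps.} First I would recall that, under Assumption \ref{A:conditions}, the preceding Proposition shows $(\mathcal{E},V(\Omega,\partial\Omega))$ is a closed coercive form in the wide sense on $L^2(\overline{\Omega},m)$, so by \cite[Corollary I.2.21]{MR92} its $L^2$-generator $(A,\mathcal{D}(A))$ generates an analytic semigroup $(T_t)_{t\geq 0}$ with $\left\|T_t\right\|_{L^2(\overline{\Omega},m)\to L^2(\overline{\Omega},m)}\leq e^{\alpha t}$; equivalently $A-\alpha$ is sectorial and generates a bounded analytic semigroup. Second, the hypothesis $f\in C^\theta((0,T],L^2(\overline{\Omega},m))$ with $0<\theta<1$ is exactly the H\"older-in-time regularity required by \cite[Theorem 4.3.1]{Lu95}, and $u_0\in L^2(\overline{\Omega},m)$ is an arbitrary initial datum in the base space. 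Third, \cite[Theorem 4.3.1]{Lu95} then yields a unique function $u\in C([0,T],L^2(\overline{\Omega},m))\cap C^1((0,T],L^2(\overline{\Omega},m))\cap C((0,T],\mathcal{D}(A))$ solving (\ref{E:Cauchy}), which is exactly the notion of classical solution defined above, and the mild/variation-of-constants formula $u(t)=T_tu_0+\int_0^tT_{t-s}f(s)\,ds$ is part of the same statement. To be careful about the normalization, one applies the cited theorem to the shifted generator $A-\alpha$ (which generates a bounded analytic semigroup) and the shifted forcing $e^{-\alpha t}f(t)$, which lies in the same H\"older class, and then undoes the substitution $v(t)=e^{-\alpha t}u(t)$; since $\int_0^t e^{-\alpha(t-s)}\cdot e^{\alpha s}$-type factors recombine correctly, the Duhamel formula for $u$ has precisely the stated form.

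\textbf{Main obstacle.} There is no deep obstacle here: the analytic work was done in establishing that $\mathcal{E}$ is closed, coercive in the wide sense, and sectorial. The only points requiring a moment's care are (a) confirming that ``closed coercive form in the wide sense'' plus the sector estimate (\ref{E:sector}) does force the generator to be sectorial in the sense of \cite{Lu95} (so that the analytic-semigroup theory applies verbatim), which is standard form theory via \cite{MR92}; and (b) matching the exact definition of classical solution used here with the class of solutions produced by \cite[Theorem 4.3.1]{Lu95}, together with handling the exponential shift so that the Duhamel representation is stated without an extra weight. Both are routine.
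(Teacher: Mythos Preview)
Your proposal is correct and matches the paper's approach exactly: the paper does not give an independent proof but simply states that, once the preceding Proposition and \cite[Corollary I.2.21]{MR92} yield an analytic semigroup generated by $A$, the existence, uniqueness and Duhamel representation follow directly from \cite[Theorem 4.3.1]{Lu95}. Your additional remarks on the exponential shift and on matching the solution classes are reasonable bookkeeping that the paper leaves implicit.
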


\begin{remark}\mbox{}
\begin{itemize}
\item[(i)] According to \cite[Theorem 4.3.1]{Lu95} for any $\varepsilon>0$ we also have 
\[\left\|u\right\|_{C^1([\varepsilon,T], L^2(\overline{\Omega},m)}+\left\|u\right\|_{C([\varepsilon,T],\mathcal{D}(A))}\leq c_{\varepsilon}\left\|f\right\|_{C^\theta([\varepsilon,T], L^2(\overline{\Omega},m)}\]
for the classical solution $u$ of (\ref{E:Cauchy}). Here $c_{\varepsilon}>0$ is a constant depending only on $\varepsilon$.
\item[(ii)] If in addition $u_0\in \mathcal{D}(\mathcal{A})$ then by \cite[Theorem 4.3.1]{Lu95} the unique classical solution $u$ to (\ref{E:Cauchy}) as in Theorem \ref{T:strictsol} is also the unique strict solution to the problem 
\begin{equation}\label{E:Cauchystrict}
\begin{cases}
\frac{du(t)}{dt}=Au(t)+f(t),\ \ 0\leq t\leq T,\\
u(0)=u_0,
\end{cases}
\end{equation}
i.e. the unique function $u\in C([0,T], L^2(\overline{\Omega},m))$ such that $u\in C^1([0,T], L^2(\overline{\Omega},m))\cap C([0,T],\mathcal{D}(A))$ and  (\ref{E:Cauchystrict}) is satisfied. In this case the estimate in (i) also holds for $\varepsilon=0$.
\end{itemize}
\end{remark}

We point out that by (\ref{E:formbound}) and a similar bound for $\mathcal{E}_{\partial\Omega}$ there is a constant $C'>0$ such 
that for the form $\mathcal{E}_{\mathcal{A}}$ in (\ref{E:EA}) we have $\mathcal{E}_{\mathcal{A}}(g)\leq C'\mathcal{E}(g)$, $g\in V(\Omega,\partial\Omega)$. Together with Assumption \ref{A:conditions} (i) this implies that 
\begin{equation}\label{E:continuity}
\int_\Omega (\nabla g)^2\:d\mathcal{L}^2\leq \frac{C'}{\lambda}\:\mathcal{E}_1(g)\leq \frac{C'}{\lambda}\:(\left\|Ag\right\|_{L^2(\overline{\Omega},m)}+\left\|g\right\|_{L^2(\overline{\Omega},m)})^2
\end{equation}
for any $g\in\mathcal{D}(A)$.

\section{Strong interpretation and co-normal derivatives}\label{S:co-normal}

Following the method in \cite[Section 6]{LaV14} we can reinterpret the abstract Cauchy problem (\ref{E:Cauchy}) as a rigorous version of (\ref{e1}). For the convenience of the reader we sketch the arguments.

Consider the space
\[V(\Omega)=\left\lbrace g\in H^1(\Omega): L_{\mathcal{A}}g\in L^2(\Omega)\right\rbrace,\]
where $L_{\mathcal{A}}g=\diverg(\mathcal{A}\nabla g)$. We equip it with the norm
\[\left\|g\right\|_{V(\Omega)}:=\left\|L_{\mathcal{A}}g\right\|_{L^2(\Omega)}+\left\|\nabla g\right\|_{L^2(\Omega,\mathbb{R}^2)}+\left\|g\right\|_{L^2(\Omega)}.\]
Proceeding as in \cite[Theorem 4.15]{La02} and \cite{LaV13} we can see that for any $g\in V(\Omega)$ we can define a distribution $l_g\in (H^1(\Omega))'$ by 
\[l_g(v):=\int_{\Omega}(\mathcal{A}\cdot\nabla g)\cdot\nabla v\:d\mathcal{L}^2+\int_{\Omega}vL_{\mathcal{A}}g\:d\mathcal{L}^2, \ \ v\in H^1(\Omega).\]
By Cauchy-Schwarz there is a constant $c>0$ such that for any $g\in V(\Omega)$ and $v\in H^1(\Omega)$ we have
\begin{equation}\label{E:lg}
|l_g(v)|\leq c\left\|g\right\|_{V(\Omega)}\left\|v\right\|_{H^1(\Omega)}
\end{equation}
and moreover, if $v, v'\in H^1(\Omega)$ are such that $v|_{\partial\Omega}=v'|_{\partial\Omega}$ then $l_g(v)=l_g(v')$. In conjunction with the trace and extension results mentioned in Appendix \ref{App:A} we can therefore view each $l_g$ as a distribution $\frac{\partial g}{\partial n_{\mathcal{A}}}$ in $(B^{2,2}_{\delta/2}(\partial\Omega))'$ and use the notation
\[\big\langle \frac{\partial g}{\partial n_{\mathcal{A}}}, v|_{\partial\Omega}\big\rangle_{(B^{2,2}_{\delta/2}(\partial\Omega))',B^{2,2}_{\delta/2}(\partial\Omega))}=l_g(v),\ \ v\in H^1(\Omega)\]
in terms of a dual pairing. See also \cite[formula (3.26)]{La03} and \cite{LaV14} for a similar reasoning. By the boundedness of the extension operator $\Ext: B^{2,2}_{\delta/2}(\partial\Omega)\to H^1(\Omega)$ together with estimate (\ref{E:lg}) there is a constant $c>0$ such that for any $g\in V(\Omega)$ we have
\begin{equation}\label{E:dualnorm}
\big\|\frac{\partial g}{\partial n_{\mathcal{A}}}\big\|_{(B^{2,2}_{\delta/2}(\partial\Omega))'}\leq c\:\left\|g\right\|_{V(\Omega)}.
\end{equation}

Given $f\in C^\theta([0,T], L^2(\overline{\Omega},m))$ as in Theorem \ref{T:strictsol} let now $u\in C((0,T],\mathcal{D}(A))$ denote the unique classical solution to (\ref{E:Cauchy}). Testing with functions from $C_c^\infty(\Omega)$ 
we observe that for any $t\in (0,T]$ we have
\[u_t(t,x)=L_{\mathcal{A}}u(t,x)+\vec{b}(x)\cdot\nabla u(t,x) +f(t,x)\]
in the distributional sense. By density this also holds in $L^2(\Omega)$. Due to the hypothesis on $f$ and according to Theorem \ref{T:strictsol}, the function $t\mapsto u_t(t)+f(t)$ is an element of $C((0,T], L^2(\Omega))$. Since 
\[\left\|\nabla u(t)-\nabla u(s)\right\|_{L^2(\Omega)}\leq \frac{C'}{\lambda}\left(\left\|A(u(t)-u(s))\right\|_{L^2(\overline{\Omega},m)}+\left\|u(t)-u(s)\right\|_{L^2(\overline{\Omega},m)}\right)\]
for any $s,t\in (0,T]$ by (\ref{E:continuity}), Theorem \ref{T:strictsol} implies that also $t\mapsto \vec{b}\cdot\nabla u(t)$ is in $C((0,T],L^2(\Omega))$. Consequently also $L_{\mathcal{A}}u\in C((0,T],L^2(\Omega))$ and therefore $u\in C((0,T], V(\Omega))$.

If for fixed $t\in (0,T]$ (suppressed from notation) we multiply the first identity in (\ref{E:Cauchy}) with some $\varphi\in V(\Omega,\partial\Omega)$, we obtain 
\begin{multline}
\left\langle u_t,\varphi\right\rangle_{L^2(\Omega)}+\left\langle u_t,\varphi\right\rangle_{L^2(\partial\Omega,\mu)}\notag\\
=-\int_{\Omega}(\mathcal{A}\cdot\nabla u)\cdot\nabla \varphi\:d\mathcal{L}^2+\int_{\Omega}(\vec{b}\cdot\nabla u)\varphi\:d\mathcal{L}^2
-c_0\mathcal{E}_{\partial\Omega}(u|_{\partial\Omega},\varphi|_{\partial\Omega})
\notag\\
+\int_{\partial\Omega}(b_{\partial\Omega}D_{\partial\Omega}u|_{\partial\Omega})\varphi d\mu-\left\langle c u,\varphi \right\rangle_{L^2(\partial\Omega,\mu)}.
\end{multline}
Taking into account that
\[\int_{\Omega}(\mathcal{A}\cdot\nabla u)\cdot\nabla\varphi\:d\mathcal{L}^2=-\big\langle \frac{\partial u}{\partial n_{\mathcal{A}}}, \varphi|_{\partial\Omega}\big\rangle_{(B^{2,2}_{\delta/2}(\partial\Omega))',B^{2,2}_{\delta/2}(\partial\Omega))}+\int_{\Omega}\varphi L_{\mathcal{A}}u\:d\mathcal{L}^2\]
and noting that by the embedding of $\mathcal{D}(\mathcal{E}_{\partial\Omega})$ into $B^{2,2}_{\delta/2}(\partial\Omega)$, we may view $\frac{\partial u}{\partial n_{\mathcal{A}}}(t)$ as an element of $(\mathcal{D}(\mathcal{E}_{\partial\Omega}))'$, we find that  
\begin{equation}\label{E:interpret}
\begin{cases}
u_t(u)-L_{\mathcal{A}}u(t)-\vec{b}\cdot\nabla u(t) &=f(t)\ \ \text{in $L^2(\Omega)$},\ \ t\in(0,T],\\
u_t(t)-c_0\Delta_{\partial\Omega}u(t)|_{\partial\Omega}-b_{\partial\Omega}D_{\partial\Omega}u(t)|_{\partial\Omega}+cu(t)&=-\frac{\partial u}{\partial n_{\mathcal{A}}}+f(t) \ \ \text{in $(\mathcal{D}(\mathcal{E}_{\partial\Omega}))'$},\ \ t\in (0,T],\\
u(0)&=u_0\ \ \text{ in $L^2(\overline{\Omega},m)$}.
\end{cases}
\end{equation}
Problem (\ref{E:interpret}) is a rigorous version of (\ref{e1}), and we may regard $u$ as in Theorem \ref{T:strictsol} as a solution. See \cite{LaV14} for further related details and references.

\section{An extension principle}\label{S:extension}

One of the main difficulties in Dirichlet form vector analysis on \oO\ is the fact that $\Omega$ is not a Lipschitz domain. Nevertheless we can prove Theorem \ref{thm:lip} below, which is an extension theorem that in some sense augments the results of \cite{E1,E2}. It also allows to use the coordinate approach from \cite{HKT15,HT15a}.    

The $\mathcal{E}_{\partial\Omega}$-intrinsic distance of two points $x,y\in\partial\Omega$ is defined as 
\[d_{\mathcal{E}_{\partial\Omega}}(x,y):=\sup\left\lbrace u(x)-u(y): u\in\mathcal{D}(\mathcal{E}_{\partial\Omega})\ \text{ with $\Gamma_{\partial\Omega}(u)\leq 1$ $\mu$-a.e.}\right\rbrace,\]
where for any $u\in\mathcal{D}(\mathcal{E}_{\partial\Omega})$, the function $\Gamma_{\partial\Omega}(u)=(D_{\partial\Omega}u)^2$ as in (\ref{E:Gammadomega}) is the density of the energy measure $\nu_u$ of $u$ with respect to $\mu$. Because $\partial\Omega$ is compact in resistance topology and $\mathcal{D}(\mathcal{E}_{\partial\Omega})\subset C(\partial\Omega)$, this is consistent with the usual definition of intrinsic distance in terms of Dirichlet forms \cite{BM95, Mo94, Stu94, Stu95}.  We record the following observation.

\begin{corollary}\label{C:identifymetric}
For any $i$ and any $x,y\in\mathring{K}_i$ we have $d_{\mathcal{E}_{\partial\Omega}}(x,y)=|h_i(x)-h_i(y)|$. 
\end{corollary}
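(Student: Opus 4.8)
The plan is to prove the two inequalities $d_{\mathcal{E}_{\partial\Omega}}(x,y)\le|h_i(x)-h_i(y)|$ and $d_{\mathcal{E}_{\partial\Omega}}(x,y)\ge|h_i(x)-h_i(y)|$ separately, for a fixed $i$ and fixed $x,y\in\mathring{K}_i$ which, after possibly interchanging them, satisfy $h_i(y)\le h_i(x)$. Since replacing a competitor $u$ by $-u$ leaves $\Gamma_{\partial\Omega}(u)$ unchanged, it suffices to control $|u(x)-u(y)|$, and by Proposition~\ref{P:closedop} the admissible competitors $u\in\mathcal{D}(\mathcal{E}_{\partial\Omega})$ are continuous and satisfy $(D_{\partial\Omega}u)^2=\Gamma_{\partial\Omega}(u)\le 1$ $\mu$-a.e. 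A preliminary fact I would record first is that on any overlap $\mathring{K}_i\cap\mathring{K}_j$ the coordinate $h_i$ is an affine function of $h_j$, in fact $h_i=h_j+\mathrm{const}$: by~(\ref{E:Moscoformula}) each $h_\ell$ is $\delta$-Hölder with $\delta$-Hölder inverse for the Euclidean metric, so $h_i\circ h_j^{-1}$ is Lipschitz on the relevant subinterval of $[0,1]$; writing $h_i=(h_i\circ h_j^{-1})\circ h_j$ there, the composition rule for the tangential gradient (Lemma~\ref{L:agree}, extended to Lipschitz outer functions) together with $D_{\partial\Omega}h_i\equiv 1$ forces $(h_i\circ h_j^{-1})'\equiv 1$. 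Consequently the energy measures of $h_i$ and $h_j$ coincide on $\mathring{K}_i\cap\mathring{K}_j$, and hence $\mu_{K_i}$ is dominated by $\mu=\mu_{K_1}+\mu_{K_2}+\mu_{K_3}$ on $\mathring{K}_i$ for every $i=1,\dots,6$.

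For the inequality ``$\le$'' I would transport each competitor to the unit interval via $h_i$. Using that $h_i\colon K_i\to[0,1]$ is a homeomorphism whose pushforward of $\mu_{K_i}$ is a positive multiple of Lebesgue measure (the identification of $\mu_{K_i}$ with the self-similar Hausdorff measure), one shows that $g:=u|_{K_i}\circ h_i^{-1}\in H^1(0,1)$ with $g'\circ h_i=D_{\partial\Omega}u$ on $\mathring{K}_i$: approximate $u|_{K_i}$ in $\mathcal{D}(\mathcal{E}_{K_i})$ by functions $F_n\circ h_i\in\mathcal{S}_{K_i}$ with $\|u|_{K_i}-F_n\circ h_i\|_{\sup}\to 0$ as in the proof of the density lemma, observe via~(\ref{E:chain}) that $(F_n)$ is then Cauchy in $H^1(0,1)$, and pass to the limit using the closedness of $D_{\partial\Omega}$. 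By the preliminary fact, $\Gamma_{\partial\Omega}(u)\le 1$ $\mu$-a.e.\ yields $|g'|\le 1$ a.e.\ on $(0,1)$, so $g$ is $1$-Lipschitz and $|u(x)-u(y)|=|g(h_i(x))-g(h_i(y))|\le|h_i(x)-h_i(y)|$; taking the supremum over $u$ gives ``$\le$''.

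For the inequality ``$\ge$'' I would exhibit a single competitor attaining the bound. Let $\{i_1,i_2,i_3\}$ be whichever of the two decompositions $\{1,2,3\}$, $\{4,5,6\}$ of $\partial\Omega$ contains $i$, relabelled so that $i_1=i$ and so that, running counterclockwise, $K_{i_1},K_{i_2},K_{i_3}$ occur cyclically with consecutive copies meeting in a single vertex that carries the value $h=1$ for the preceding copy and $h=0$ for the following one. Put $G(t):=\min\{\max\{t-h_i(y),0\},\,h_i(x)-h_i(y)\}$, a $1$-Lipschitz function on $[0,1]$ with $G(h_i(y))=0$, $G(h_i(x))=h_i(x)-h_i(y)$, $G(0)=0$ and $G(1)=h_i(x)-h_i(y)\in[0,1)$, and define
\[u:=G\circ h_{i_1}\ \text{ on }K_{i_1},\qquad u:=G(1)\,(1-h_{i_2})\ \text{ on }K_{i_2},\qquad u:=0\ \text{ on }K_{i_3}.\]
The three pieces match at the three shared vertices, so $u\in C(\partial\Omega)$; each $u|_{K_{i_k}}$ is a Lipschitz function of $h_{i_k}$, hence by the preliminary fact $u|_{K_j}$ is a Lipschitz function of $h_j$ for all six copies, so $u\in\mathcal{D}(\mathcal{E}_{\partial\Omega})$. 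On $\mathring{K}_{i_1},\mathring{K}_{i_2},\mathring{K}_{i_3}$ respectively one has $\Gamma_{\partial\Omega}(u)=(G'\circ h_{i_1})^2\le 1$, $\Gamma_{\partial\Omega}(u)=G(1)^2\le 1$, and $\Gamma_{\partial\Omega}(u)=0$, and these open sets leave out only the three shared vertices, which are $\mu$-null; thus $\Gamma_{\partial\Omega}(u)\le 1$ $\mu$-a.e. Finally $u(x)-u(y)=G(h_i(x))-G(h_i(y))=h_i(x)-h_i(y)=|h_i(x)-h_i(y)|$, which gives ``$\ge$'', and together with ``$\le$'' the asserted equality.

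The routine parts are the continuity checks at the shared vertices and the finite-energy verification of the competitor. The step I expect to require the most care is the preliminary compatibility statement for the harmonic coordinates (and the resulting domination of $\mu_{K_i}$ by $\mu$ on overlaps, which is exactly what makes the whole argument uniform in $i$, covering the ``shifted'' copies $i\in\{4,5,6\}$ as well), together with the precise identification, in the ``$\le$'' part, of $D_{\partial\Omega}$ restricted to $\mathring{K}_i$ with the ordinary weak derivative of $u|_{K_i}\circ h_i^{-1}$; both reduce to the scaling identity~(\ref{E:Moscoformula}) and the chain rule for energy measures under Lipschitz maps, but should be written out explicitly rather than quoted.
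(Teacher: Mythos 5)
Your proposal is correct, and for the upper bound it takes a genuinely different route from the paper. The paper proves $d_{\mathcal{E}_{\partial\Omega}}(x,y)\le|h_i(x)-h_i(y)|$ by an approximation-and-truncation argument: it replaces a near-optimal competitor $u$ by a function $F_i\circ h_i$ with $\|F_i'\|_{\sup}\le 1$ that is sup-norm close to $u$ on $K_i$ (cutting off $\widetilde F_i'$ at level $1$ and controlling the exceptional set $A_\delta$ by uniform integrability), at the cost of some $\varepsilon/3$ bookkeeping. You instead read every competitor in the coordinate $h_i$ as an $H^1(0,1)$ function $g=u|_{K_i}\circ h_i^{-1}$ with $|g'|\le 1$ a.e., hence $1$-Lipschitz; this is arguably more direct and yields the exact inequality for every competitor rather than an $\varepsilon$-approximate one. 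What your route requires in exchange is the identification of $D_{\partial\Omega}u$ on $\mathring{K}_i$ with $g'\circ h_i$, which rests on two facts you should write out: that $(h_i)_\ast\mu_{K_i}$ is a constant multiple of Lebesgue measure on $[0,1]$ (so that energy-Cauchy sequences $F_n\circ h_i$ give $H^1(0,1)$-Cauchy sequences $F_n$), and a localization step, since $D_{\partial\Omega}$ is defined as a global operator via Proposition \ref{P:closedop} while your approximants live only on $K_i$ (glue with the partition of unity from the density lemma, or argue via the energy measure of $u|_{K_i}$ and strong locality). For the lower bound both proofs exhibit an explicit piecewise coordinate competitor; the paper simply takes $h_i$ on $K_i$, $1-h_j$ on a neighbour and a constant elsewhere, which is marginally simpler than your plateau $G$ and avoids having to mollify a merely Lipschitz outer function to place $G\circ h_{i_1}$ in the form domain (a point you should address if you keep $G$). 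Finally, your preliminary compatibility fact $h_i=h_j+\mathrm{const}$ on overlaps is true and is indeed what makes the statement uniform over $i\in\{4,5,6\}$, but your derivation via a chain rule for Lipschitz outer functions is the shakiest step as written; it is cleaner to note that the restriction of a harmonic function to a cell is harmonic and that $h_i$ and $h_j$ have the same boundary values on each overlap cell up to an additive constant, whence they agree there by uniqueness of the harmonic extension.
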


\begin{proof}
Consider a function $u$ which equals $h_i$ on $K_i$, $1-h_j$ on one of the neighboring $K_j$ and is constant outside $K_i\cup K_j$. Then clearly $u\in\mathcal{D}(\mathcal{E}_{\partial\Omega})$ and $\Gamma(u)\leq 1$ $\mu$-a.e. Hence 
\[|h_i(x)-h_i(y)|=|u(x)-u(y)|\leq d_{\mathcal{E}_{\partial\Omega}}(x,y).\]

To see the converse inequality let $\varepsilon>0$ and let $u\in\mathcal{D}(\mathcal{E}_{\partial\Omega})$ be a function with $\Gamma(u)\leq 1$ $\mu$-a.e. and such that $d_{\mathcal{E}_{\partial\Omega}}(x,y)\leq u(x)-u(y)+\frac{\varepsilon}{3}$. The desired inequality follows provided we can find $v\in\mathcal{D}(\mathcal{E}_{\partial\Omega})$ with $v=F_i\circ h_i$ and $\left\|F_i'\right\|_{\sup}\leq 1$ such that $\sup_{x\in K_i}|u(x)-v(x)|<\frac{\varepsilon}{3}$, because in this case $d_{\partial\Omega}(x,y)\leq v(x)-v(y)+\varepsilon$. By the resistance estimate (\ref{E:resistest}) it is sufficient to show that for any $\delta>0$ we can find a function $v\in\mathcal{D}(\mathcal{E}_{\partial\Omega})$ with $v=F_i\circ h_i$ and $\left\|F_i'\right\|_{\sup}\leq 1$ such that $\mathcal{E}_{K_i}(v-u)^{1/2}<4\delta$.

Let $(v_n)_{n=1}^\infty\subset\mathcal{S}_{\partial\Omega}$ with $v_n=F_i^{(n)}\circ h_i$ be such that $\lim_{n\to\infty}\mathcal{E}_{\partial\Omega}(v_n-u)=0$. Then by a similar reasoning as in the proof of Proposition \ref{P:closedop} we observe $\lim_{n\to\infty} ((F_i^{(n)})'\circ h_i)^2=\Gamma(u)$ in $L^1(K_i,\mu)$. Therefore, if for sufficiently large $n$ we set $\widetilde{F}_i:=F_i^{(n)}$, we can ensure that $\mathcal{E}_{K_i}(\widetilde{F}_i\circ h_i-u)<\delta^2$ and, by uniform integrability, that 
\[\sup_n\int_{A_\delta}(F_i^{(n)})'(h_i(x))^2\mu(dx)<\frac{\delta^2}{2},\]
where $A_\delta=\left\lbrace x\in K_i: |\widetilde{F}_i'(h_i(x))|\geq 1+\delta\right\rbrace$.
Note that for sufficiently large $n$ the measure $\mu(A_\delta)$ of $A_\delta$ will be smaller than any given $\delta'>0$, because
\[\mu(A_\delta)\leq \mu\left(\left\lbrace x\in K_i:|\widetilde{F}_i'(h_i(x))^2-\Gamma(u)(x)|\geq \delta^2\right\rbrace\right)\leq \frac{1}{\delta^2}\int_{K_i}|\widetilde{F}_i'(h_i(x))^2-\Gamma(u)(x)|\mu(dx).\]
Next, let $\varphi\in C^1(\mathbb{R})$ be a function with $\left\|\varphi\right\|_{\sup}\leq 1$ such that $|\varphi(s)|\leq |(s\wedge 1)\vee(-1)|$ for all $s$ and $\big\|\varphi(\widetilde{F}_i')-(\widetilde{F}_i'\wedge 1)\vee (-1)\big\|_{\sup}<\delta$. Then for the function
\[F_i(s):=\widetilde{F}_i(0)+\int_0^s\varphi(\widetilde{F}_i')(t)dt\]
we have $\sup_{x\in K_i\setminus A_\delta}|F_i'(h(x))-\widetilde{F}_i'(h(x))|<2\delta$ and
\[\int_{A_\delta} F_i'(h_i(x))^2\mu(dx)\leq \int_{A_\delta}\widetilde{F}_i'(h_i(x))^2\mu(dx)<\frac{\delta^2}{2}.\]
Combining, we observe 
\begin{align}
\mathcal{E}_{K_i}(F_i\circ h_i-u)^{1/2}&\leq \mathcal{E}_{K_i}(\widetilde{F}_i\circ h_i-u)^{1/2}+\left(\int_{K_i\setminus A_\delta}|\widetilde{F}_i'(h_i(x))-F_i'(h_i(x))|^2\mu(dx)\right)^{1/2}\notag\\
&\ \ \ \ \ \ \ \ \ +\left(4\int_{A_\delta} (\widetilde{F}_i'(h_i(x))^2 + F_i'(h_i(x))^2)\mu(dx)\right)^{1/2}\notag\\
&\leq 4\delta.\notag
\end{align}
Now let $v$ be the function on $\partial\Omega$ that equals $F_i\circ h_i$ on $K_i$ and is linear on $K_i^c$. Clearly $v\in\mathcal{D}(\mathcal{E}_{\partial\Omega})$, and since $\sup_{x,y\in K_i}|v(x)-v(y)|\leq 1$, we also have $\Gamma(v)\leq 1$ $\mu$-a.e. on $\partial\Omega\setminus K_i$.
\end{proof}

We call a function \emph{$\mathcal{E}_{\partial\Omega}$-intrinsically Lipschitz} if it is Lipschitz with respect to $d_{\mathcal{E}_{\partial\Omega}}$. Consequently a function $u\in\mathcal{D}(\mathcal{E}_{\partial\Omega})$ is $\mathcal{E}_{\partial\Omega}$-intrinsically Lipschitz if and only if there is a constant $L>0$ such that $|u(x)-u(y)|\leq L|h_i(x)-h_i(y)|$ for any $x,y\in \mathring{K}_i$ and any $i$. The following is the main result of this section.
   
   \begin{theorem}\label{thm:lip}
      Any \EpO-intrinsically Lipschitz function on \pO\ has an Euclidean-Lipschitz extension to \oO.
         \end{theorem}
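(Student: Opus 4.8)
The plan is to build the extension by hand, as a uniform limit of piecewise-affine functions on a nested sequence of shape-regular triangulations of the polygonal pre-fractal domains; this realises the ``triangulation method''. I would first reformulate the hypothesis. By the characterisation recorded just before the statement (and Corollary~\ref{C:identifymetric}), a function $u$ is $\mathcal{E}_{\partial\Omega}$-intrinsically Lipschitz with constant $L$ precisely when each $g_i:=u\circ h_i^{-1}\colon[0,1]\to\mathbb{R}$, $i=1,\dots,6$, is ordinary-Lipschitz with constant $L$, the $g_i$ being mutually compatible at the three corners of $\partial\Omega$. Together with the identity $|h_i(p)-h_i(q)|=|p-q|^{\delta}$ for $n$-neighbours, from (\ref{E:Moscoformula}), this yields the single quantitative input I shall use: if $p,q$ are vertices of one and the same level-$n$ cell of some $K_i$, then $|u(p)-u(q)|\le C\,L\,4^{-n}$.

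Next I would fix the geometry. Let $\Omega_n$ be the $n$-th polygonal pre-fractal domain, so $\overline{\Omega_0}\subset\overline{\Omega_1}\subset\cdots$, $\Omega=\bigcup_n\Omega_n$, and $\overline{\Omega_n}$ is obtained from $\overline{\Omega_{n-1}}$ by attaching equilateral ``spike'' triangles of side $3^{-n}$ along their bases. Choose equilateral triangulations $\mathcal{G}_n$ of $\overline{\Omega_n}$ into triangles of side $3^{-n}$ such that $\mathcal{G}_n$ refines $\mathcal{G}_{n-1}$ on $\overline{\Omega_{n-1}}$ and every vertex of $\partial\Omega_n$ is a vertex of $\mathcal{G}_n$; then the three vertices of each spike triangle and every valley point produced at stage $n$ are vertices of $\mathcal{G}_n$, and all these ``boundary vertices'' lie in $V_\ast(\partial\Omega)\subset\partial\Omega$. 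Define $\tilde u_n$ on $\overline{\Omega_n}$ recursively: $\tilde u_n$ is $\mathcal{G}_n$-piecewise-affine, with $\tilde u_n(v)=u(v)$ at every vertex $v$ lying on $\partial\Omega$ and $\tilde u_n(v)=\tilde u_{n-1}(v)$ at every other vertex (legitimate, since such $v$ sits on an edge or inside a triangle of $\mathcal{G}_{n-1}$ where $\tilde u_{n-1}$ is affine), starting from the affine interpolant $\tilde u_0$ of the three corner values of $u$ on $\overline{\Omega_0}$. On a triangle of $\mathcal{G}_n$ that does not meet $\partial\Omega$ one has $\tilde u_n=\tilde u_{n-1}$, whereas at a reset vertex the change $u(v)-\tilde u_{n-1}(v)$ is bounded by the oscillation of $u$ over a level-$n$ cell, hence is $O(L\,4^{-n})$; therefore $\|\tilde u_n-\tilde u_{n-1}\|_\infty\le C\,L\,4^{-n}$, the sequence converges uniformly on $\Omega$ to a continuous $\tilde u$, and --- the boundary vertices being dense in $\partial\Omega$, the triangles shrinking, and $u$ being continuous --- $\tilde u$ extends continuously to $\overline{\Omega}$ with $\tilde u|_{\partial\Omega}=u$.

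It remains to bound the gradient and globalise. On a triangle of $\mathcal{G}_n$ of side $3^{-n}$ all of whose vertices lie on $\partial\Omega$, the relevant values of $u$ lie in a single level-$(n-1)$ cell and hence oscillate by $O(L\,4^{-n})$; since the inradius is comparable to $3^{-n}$, the affine piece has gradient of size $O(L\,(3/4)^n)$, and the same estimate applies to the increments $\tilde u_k-\tilde u_{k-1}$, which are supported within $O(3^{-k})$ of $\partial\Omega$, whereas $\tilde u_0$ has gradient $O(L)$ because the corners of $\Omega_0$ are the endpoints of a single $K_i$. Near any point of $\Omega$ only finitely many corrections are nonzero, so $\tilde u$ is there piecewise affine; summing the geometric series $\sum_k(3/4)^k$ bounds all these gradients by $C\,L$, so $\tilde u$ is locally $C\,L$-Lipschitz on $\Omega$. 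Since the Koch snowflake domain is quasiconvex, local Lipschitz continuity globalises: any $x,y\in\Omega$ are joined in $\Omega$ by a curve of length at most $C_0|x-y|$, whence $|\tilde u(x)-\tilde u(y)|\le C\,C_0\,L\,|x-y|$, and this passes to $\overline{\Omega}$ by continuity.

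I expect the principal difficulty to be the combinatorial bookkeeping: arranging the $\mathcal{G}_n$ so that the refinement is conforming, the spike triangles fit exactly, and each boundary vertex at which one resets to $u$ is shared consistently by the triangles on both sides, so that $\tilde u_n$ is genuinely well defined and continuous; and, alongside this, the verification that the $O(L\,4^{-n})$ corrections do not accumulate as they propagate inward. The quasiconvexity of $\overline{\Omega}$ is classical and may simply be cited (or derived from self-similarity); alternatively one can avoid invoking it by checking directly that the dual graph of the triangulation realises distances comparable to the Euclidean ones.
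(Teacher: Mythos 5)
Your construction is correct and runs on the same engine as the paper's: piecewise-affine interpolation on self-similar triangulations at scale $3^{-n}$, combined with the observation (Corollary~\ref{C:identifymetric} plus (\ref{E:Moscoformula}), i.e.\ (\ref{E:lipschitzratio})) that the intrinsic oscillation of $f$ over a level-$n$ cell is $O(L\,4^{-n})$, so that the affine pieces have gradient $O(L\,(3/4)^n)$. The paper in fact gives two such proofs, one (Subsection~\ref{subsub:lip}) interpolating \emph{averaged} boundary values on lattice sets $\T_n$ kept strictly inside $\Omega$, and one (Subsection~\ref{subsub:lip2}) interpolating exact boundary values on nested hexagonal cells touching \pO; yours is a third variant of the second kind, built on the prefractal polygons $\overline{\Omega_n}$ with a telescoping correction $\tilde u_n-\tilde u_{n-1}$. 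What your version buys is a fully explicit convergence and gluing argument: the $O(L\,4^{-n})$ sup-norm bound on the increments, the geometric series controlling the gradients, and the globalization of the local Lipschitz bound via quasiconvexity of $\Omega$ --- all of which the paper compresses into ``implied by scaling''. The one step you should make explicit is the oscillation bound on a triangle of $\mathcal{G}_n$ all of whose vertices lie on \pO\ but not in a single cell: in the valleys of the snowflake such vertices can belong to different level-$(n-1)$ cells, and you need the bounded-turning (quasiarc) property of the Koch curve --- equivalently, that points of $V_n(\partial\Omega)$ at Euclidean distance $3^{-n}$ are separated by a bounded number of consecutive level-$n$ cells --- to keep the oscillation at $O(L\,4^{-n})$; this is precisely the issue the paper's first proof handles with its chain of at most eight consecutive points (Figure~\ref{F:Av}). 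This is a point of care rather than a gap.
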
 
         
         This theorem has implications for the Dirichlet form $\E_0$ as in (\ref{E:unperturbed}). To functions from $\mathcal{S}_{\partial\Omega}$ we refer as \emph{$\mathcal{E}_{\partial\Omega}$-intrinsic $C^1$-functions}.
         
\begin{corollary}\label{cor:coord} \mbox{} 
\begin{enumerate}
\item[(i)]  The Dirichlet form $\mathcal{E}_0$ has a core consisting of functions in $C(\oO)$ whose restriction to $\Omega$ is in $C^1(\Omega)$, and whose restriction to $\partial\Omega$ is a $\mathcal{E}_{\partial\Omega}$-intrinsic $C^1$-function. For a core function $u$ the gradients $\nabla u$ and $D_{\partial\Omega}u$ are well defined pointwise in $\Omega$ and on $\partial\Omega$, respectively.
\item[(ii)] There are two coordinate functions $y_1,y_2$ 
   which are contained in the core and separate the points of  \oO. 
   \end{enumerate}
\end{corollary}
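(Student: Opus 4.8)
The plan is to display an explicit core $\mathcal C$ of $\mathcal E_0$ and then construct two separating coordinate functions inside it; the only nonelementary inputs are the density lemma for $\mathcal S_{\partial\Omega}$ from Section~\ref{S:tangential}, the trace and extension theory recalled in Appendix~\ref{App:A} (in particular the embedding $\mathcal D(\mathcal E_{\partial\Omega})\hookrightarrow B^{2,2}_{\delta/2}(\partial\Omega)$ and the bounded extension $B^{2,2}_{\delta/2}(\partial\Omega)\to H^1(\Omega)$), Theorem~\ref{thm:lip}, and, for part (ii), the Rad\'o--Kneser--Choquet univalence theorem for planar harmonic maps. Put
\[\mathcal C:=\big\{u\in V(\Omega,\partial\Omega):\ u|_{\Omega}\in C^1(\Omega)\ \text{and}\ u|_{\partial\Omega}\in\mathcal S_{\partial\Omega}\big\}.\]
For $g$ in the trace space let $\mathrm{Ext}(g)\in H^1(\Omega)$ be its \emph{harmonic} extension, i.e. the energy minimizer of $\{w\in H^1(\Omega):w|_{\partial\Omega}=g\}$; since the snowflake domain is regular for the Dirichlet problem, $\mathrm{Ext}(g)\in C(\overline\Omega)\cap C^\infty(\Omega)$ for $g\in C(\partial\Omega)$, and $\mathrm{Ext}$ is linear. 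For $g\in\mathcal D(\mathcal E_{\partial\Omega})$ the resistance estimate (\ref{E:resistest}) and the maximum principle, together with the above embeddings, give $\|\mathrm{Ext}(g)\|_{H^1(\Omega)}\le c\,\|g\|_{\mathcal D(\mathcal E_{\partial\Omega})}$; in particular $\mathrm{Ext}(g)\in\mathcal C$ when $g\in\mathcal S_{\partial\Omega}$. (One could instead obtain an admissible interior-$C^1$ extension from the Euclidean-Lipschitz extension of Theorem~\ref{thm:lip} by an interior mollification; we use the harmonic one because it is linear and directly smooth in $\Omega$.) Also $C_c^\infty(\Omega)\subset\mathcal C$.

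\emph{Part (i).} First, $\mathcal C$ is a subalgebra of $C(\overline\Omega)$: sums and products preserve $C^1(\Omega)$, preserve $\mathcal S_{\partial\Omega}$ (on each $\mathring K_i$, $(F_i\circ h_i)(G_i\circ h_i)=(F_iG_i)\circ h_i$ with $F_iG_i\in C^1(\mathbb R)$), and preserve membership in $V(\Omega,\partial\Omega)$ since all its elements are bounded. To see $\mathcal C$ is dense in $V(\Omega,\partial\Omega)$, write $u=\mathrm{Ext}(u|_{\partial\Omega})+v$ with $v:=u-\mathrm{Ext}(u|_{\partial\Omega})\in H^1_0(\Omega)$ (zero trace); choose $g_k\in\mathcal S_{\partial\Omega}$ with $g_k\to u|_{\partial\Omega}$ in $\mathcal D(\mathcal E_{\partial\Omega})$ by the density lemma of Section~\ref{S:tangential}, and $\varphi_j\in C_c^\infty(\Omega)$ with $\varphi_j\to v$ in $H^1_0(\Omega)$; then $\mathrm{Ext}(g_k)+\varphi_j\in\mathcal C$ converges to $u$ in the norm (\ref{E:Hilbert}) because $\mathrm{Ext}(g_k)\to\mathrm{Ext}(u|_{\partial\Omega})$ in $H^1(\Omega)$ and $\mathrm{tr}\big(\mathrm{Ext}(g_k)+\varphi_j\big)=g_k\to u|_{\partial\Omega}$ in $\mathcal D(\mathcal E_{\partial\Omega})$. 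Density of $\mathcal C$ in $C(\overline\Omega)$ is analogous: for $f\in C(\overline\Omega)$ pick $g\in\mathcal S_{\partial\Omega}$ with $\|f|_{\partial\Omega}-g\|_\infty$ small, note $\|\mathrm{Ext}(f|_{\partial\Omega})-\mathrm{Ext}(g)\|_\infty$ is small by the maximum principle, and approximate $f-\mathrm{Ext}(f|_{\partial\Omega})\in C_0(\Omega)$ uniformly by $C_c^\infty(\Omega)$. Hence $\mathcal C$ is a core. For $u\in\mathcal C$, $\nabla u$ is the classical gradient on $\Omega$ (coinciding with the $H^1$-gradient), and $D_{\partial\Omega}u(x)=F_i'(h_i(x))$ on $\mathring K_i$ by (\ref{E:tangential}), well defined by the corollary to Lemma~\ref{L:agree} and agreeing with the $L^2$-object of Proposition~\ref{P:closedop}.

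\emph{Part (ii).} Parametrize the topological circle $\partial\Omega$ by $\vartheta:\partial\Omega\to\mathbb R/\mathbb Z$ that is affine in the harmonic coordinate on each of $K_1,K_2,K_3$, namely $\vartheta=\tfrac13 h_1$ on $K_1$, $\vartheta=\tfrac13(1+h_2)$ on $K_2$, $\vartheta=\tfrac13(2+h_3)$ on $K_3$; this is a homeomorphism and lies in $\mathcal S_{K_1},\mathcal S_{K_2},\mathcal S_{K_3}$. Since any two harmonic coordinates on a common sub-arc differ by an affine change of variable, on each of the copies $\mathring K_4,\mathring K_5,\mathring K_6$ the function $\vartheta$ is, in the corresponding harmonic coordinate, piecewise affine with exactly one breakpoint, located at one of the three points where $K_1,K_2,K_3$ meet. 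Replacing $\vartheta$ near each such junction point by a strictly increasing $C^1$ function interpolating the two affine arms (matching first derivatives at the ends of the modification interval and having vanishing derivative \emph{at} the junction, so that $C^1$-ness persists in every harmonic coordinate through that point) produces $\widetilde\vartheta\in\mathcal S_{\partial\Omega}$, still a homeomorphism onto $\mathbb R/\mathbb Z$. Set $g_1:=\cos(2\pi\widetilde\vartheta)$, $g_2:=\sin(2\pi\widetilde\vartheta)$; on each $\mathring K_i$ these are $C^1$-functions of $h_i$, so $g_1,g_2\in\mathcal S_{\partial\Omega}$, and $(g_1,g_2)$ maps $\partial\Omega$ homeomorphically onto $\partial\mathbb D$. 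Let $y_\ell:=\mathrm{Ext}(g_\ell)\in\mathcal C$, $\ell=1,2$. Since $\partial\Omega$ is a Jordan curve, a Riemann map $\mathbb D\to\Omega$ extends to a homeomorphism of the closures (Carath\'eodory), and precomposing the harmonic map $\Phi=(y_1,y_2)$ with it gives a harmonic map of $\mathbb D$ whose boundary map is a homeomorphism onto $\partial\mathbb D=\partial(\text{convex domain})$; by Rad\'o--Kneser--Choquet this map, hence $\Phi$, is a homeomorphism of $\overline\Omega$, so $y_1,y_2$ separate the points of $\overline\Omega$.

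The routine parts are the density arguments and the algebra structure, once $\mathrm{Ext}$ is fixed. The two points that need care are: producing a separating parametrization of $\partial\Omega$ that is $\mathcal E_{\partial\Omega}$-intrinsically $C^1$ \emph{simultaneously} with respect to all six copies $K_1,\dots,K_6$ — handled by the affine compatibility of harmonic coordinates on overlapping cells together with the $C^1$-rounding (with vanishing derivative) at the three junction points; and upgrading injectivity on $\partial\Omega$ to injectivity on all of $\overline\Omega$ for the harmonically extended coordinate map. I expect the latter, global univalence, to be the main obstacle, which is precisely why the Rad\'o--Kneser--Choquet theorem (and the choice of a convex target boundary curve) enters.
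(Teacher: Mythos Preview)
Your argument is correct but follows a genuinely different route from the paper's. For part~(i), the paper simply invokes Theorem~\ref{thm:lip}: it takes Euclidean-Lipschitz extensions of $\mathcal{E}_{\partial\Omega}$-intrinsic $C^1$-functions and smooths them in the interior. You instead bypass the Lipschitz extension entirely by using the harmonic extension as your canonical lift from $\mathcal{S}_{\partial\Omega}$ into $\mathcal{C}$, which gives interior $C^\infty$-regularity for free and makes the density arguments in both $V(\Omega,\partial\Omega)$ and $C(\overline\Omega)$ transparent via the splitting $u=\mathrm{Ext}(u|_{\partial\Omega})+v$, $v\in H^1_0(\Omega)$. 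For part~(ii), the paper builds ``deformed polar coordinates'': an angular $y_1$ obtained by extending a locally affine function of the harmonic coordinate via Theorem~\ref{thm:lip} (then smoothing), and a radial $y_2$ defined by $y_2|_{\partial\mathcal{T}_n}=3^{-n}$ with piecewise-linear interpolation between shells; separation follows directly from the polar structure. You instead harmonically extend $(\cos 2\pi\widetilde\vartheta,\sin 2\pi\widetilde\vartheta)$ and invoke Rad\'o--Kneser--Choquet (pulled back through a Riemann map and its Carath\'eodory extension) to obtain a global homeomorphism onto $\overline{\mathbb{D}}$. Your approach is slicker and yields a genuine coordinate chart on all of $\overline\Omega$ at once, at the cost of importing an external univalence theorem; the paper's approach is more elementary and entirely self-contained within the triangulation machinery of Section~\ref{S:extension}. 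One small remark: your ``rounding with vanishing derivative'' at the junctions of $K_1,K_2,K_3$ is in fact unnecessary---since on each half of $K_4$ (say) the relation between $h_4$ and $h_1$ resp.\ $h_2$ is $h_4=h_1-\tfrac12$ resp.\ $h_4=h_2+\tfrac12$, the piecewise-affine $\vartheta$ has matching slopes $\tfrac13$ on both sides and is already globally affine in $h_4$; but the rounding is harmless and the argument stands as written.
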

   
\begin{remark}\mbox{}
\begin{enumerate}
\item[(i)] In particular, for a core function $u$ we have that $D_{\partial\Omega} u$ is continuous on $\partial \Omega$ and $\nabla u$ is continuous on $\Omega$, and the derivatives can be computed by the usual calculus rules. We do not claim that $\nabla u$ is continuous on \oO. Note that a typical function in Euclidean $C^1(\oO)$ is not in the domain of \E.
\item[(ii)]   The coordinate functions in Corollary \ref{cor:coord} can be used to 
   obtain a coordinate representation for the gradients. 
   \end{enumerate}
\end{remark}

Before proving Theorem \ref{thm:lip}, we introduce general notation we will be using. For notational convenience we assume that the domain $\Omega$ pictured in Figure~\ref{fig-snow} is embedded in the complex plane 
$\mathbb C$ and inscribed in the  circle of radius $\sqrt3$. 
In particular, the six most outward  points of \oO\ are   $\sqrt3e^{ik\pi/3+i\pi/6}$, $k=0,1,...,5$. 
The inscribed circle is the unit circle. 
In particular, the six most inward  points of \pO\ are the 6th roots of unity  $e^{ik\pi/3}$, $k=0,1,...,5$. 
The most left and right most inward points of \pO\ are $-1,1\in\mathbb C$ respectively.

Although we now work with Koch curves that, in comparison with those in Section \ref{S:tangential}, are dilated by the factor $3$, we will continue to use the notation of Section \ref{S:tangential}. For example, two $p$ and $q$ from $V_{n+1}(\partial\Omega)$ that are neighbors, $p\sim_{n+1} q$, now have Euclidean distance $3^{-n}$. Let us write $V_n(\partial\Omega):=\bigcup_{i=1}^6 V_n(K_i)$, where $V_n(K_i)$ is defined for $K_i$ in the same way as $V_n(K)$ for $K$.

\subsection{A first proof of Theorem~\ref{thm:lip}}\label{subsub:lip}

Let $f$ be an \EpO-intrinsically Lipschitz function on \pO. We would like to construct 
a Euclidean-Lipschitz extension of $f$ to \oO, which we denote $g$. There are several natural constructions based on the natural ``approximate'' triangulations of $\Omega$.  
One of them is reminiscent of the study of the structure of $\varepsilon$-neighborhoods of the Koch curve described in \cite{LP06}, and also of the constructions in \cite{E1,E2}. It uses a weakly self-similar triangulation, \emph{which is nicely separated from the boundary} of the fractal 
 snowflake domain 
  $ %\overline
  {\Omega}$.

For this first construction, we consider the lattice $L_n=3^{-n}\mathbb Z \{e^{ik\pi/3}|k=0,1,...,5\}$. 
It is easy to see that for each $x\in L_n\cap\Omega$ there are finitely many points of $L_n\cap\pO$ that are closest to $x$ (this number of the closest boundary points ranges from six for the center point $x=0\in\Omega$, to one for most of points of $x\in L_n\cap\Omega$). On $L_n\cap\Omega$ we obtain a function $g_n$ by defining $g_n(x)$ to be the average of $f$ at these points of $L_n\cap\pO$ closest to $x\in L_n\cap\Omega$.
Note that for any $n$ we have $L_n\cap\pO=V_{n+1}(\partial\Omega)$ (in the notation of the present section).

Naturally, we assume that the points of each lattice $L_n$ are the vertices of closed equilateral triangles $T_{n,m}$ of sides $3^{-n}$, and so each lattice $L_n$
defines a triangulation of $\mathbb R^2=\cup_mT_{n,m}$ into such triangles $T_{n,m}$.
We define $\T_n:=\bigcup_{m: T_{n,m}\subset\Omega} T_{n,m}$ as the union of triangles $T_{n,m}$ that lie inside $\Omega$. Note that $\T_n$  is a compact set contained in  $\Omega$, and so $\T_n$  is separated from \pO. See Figures \ref{F:T1} and \ref{F:T2}.
As usual, we denote the boundary of $\T_n$ by $\partial\T_n$.

  \begin{figure}
  \scalebox{.5}{
 \def\HEX{ \put(0,0){\line(1, 0){12}}
 	\put(0,0){\line(3, 5){6}}
 	\put(12,0){\line(-3, 5){6}}
 	\put(0,0){\line(-1, 0){12}}
 	\put(0,0){\line(-3, 5){6}}
 	\put(-12,0){\line(3, 5){6}}
 	\put(-6,10){\line(1, 0){12}}
 	\put(0,0){\line(1, 0){12}}
 	\put(0,0){\line(3, -5){6}}
 	\put(12,0){\line(-3, -5){6}}
 	\put(0,0){\line(-1, 0){12}}
 	\put(0,0){\line(-3, -5){6}}
 	\put(-12,0){\line(3, -5){6}}
 	\put(-6,-10){\line(1, 0){12}}}
 \def\TT{\multiput(0,0)(6,10){2}{
 		\multiput(0,0)(12,0){2}{
 			\put(0,0){\line(3, 5){6}}
 	\put(12,0){\line(-3, 5){6}}
 	\put(0,0){\line(1, 0){12}}
 	\put(12,0){\line(3, 5){6}}
 	\put(6,10){\line(1, 0){12}}
 	}}}
 \def\TTu{ 	\put(0,0){\line(1, 0){24}}
 			\put(0,0){\line(3, 5){12}}
 			\put(24,0){\line(-3, 5){12}}
 			\put(12,0){\line(3, 5){6}}
 			\put(12,0){\line(-3, 5){6}}
 			\put(6,10){\line(1, 0){12}}
 		 		}
 \def\TTd{ 	\put(0,0){\line(1, 0){24}}
 	\put(0,0){\line(3, -5){12}}
 	\put(24,0){\line(-3, -5){12}}
 	\put(12,0){\line(3,- 5){6}}
 	\put(12,0){\line(-3, -5){6}}
 	\put(6,-10){\line(1, 0){12}}
 }
  \begin{picture}(324, 360)(-162, -180)
 \put(-175,-190)
 {\includegraphics[width=350pt,height=380pt]{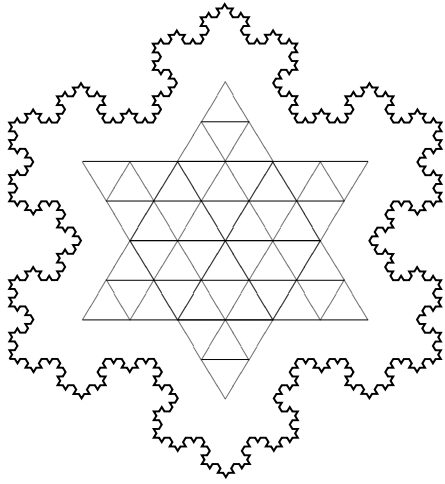}}
 \setlength{\unitlength}{3pt}
\put(0,0){\TTu\TTd}
\put(-24,0){\TTu\TTd}
\put(-12,20){\TTu\TTd}
\put(-12,-20){\TTu\TTd}
\put(12,-20){\TTu}
\put(-36,-20){\TTu}
\put(12,20){\TTd}
\put(-36,20){\TTd}
  \end{picture}
  }
 \caption{The set $\T_1\subset \Omega$, triangulated by triangles of side length $\frac13$.}
  \label{F:T1}
 \end{figure}

From the functions $g_n$ on $L_n\cap\Omega$ we now inductively define $g$ as follows. On $\T_1$ we define $g$ as piece-wise linear interpolation of $g_1$ on the triangulation by triangles $T_{1,m}\subset\T_1$. Next, we extend $g$ from  $\T_1$ to $\T_2$ 
using piece-wise linear interpolation of $g_2$ on the triangles  $T_{2,m}$ contained in $\T_2$ but not in $\T_1$. Inductively, if $g$ is already defined on $\T_{n-1}$, we extend it to  $\T_n$ 
using piece-wise linear interpolation of $g_n$ on the triangles $T_{n,m}$ contained in $\T_{n}$ but not in $\T_{n-1}$. 

In other words, on the ``shell'' $\partial\T_n$ we define $g$ as the unique piecewise linear interpolation of $g_n$ restricted to $\partial\T_n$. In the region between 
$\partial\T_{n-1}$ and $\partial\T_n$ we define 
$g$ by interpolating linearly on the triangles $T_{n,m}$ 
that lie in $\T_n$ but not in $\T_{n-1}$. 

The resulting function $g$ is an Euclidean-Lipschitz extension of $f$ 
because of scaling. When we pass from $n$ to $n+1$, the \EpO-intrinsic  Lipschitz 
constant of function $f$ is scaled by the factor $\frac14$, but the 
 Euclidean-Lipschitz constant of function $g$ is scaled by 
 the factor $\frac13$. More precisely, for any $p\in V_n(\partial\Omega)$ we have
\begin{equation}\label{E:lipschitzratio}
L_{\mathcal{E}_{\partial\Omega}}(f)\geq \sup_{q\in V_n(\partial\Omega), q\sim_n p}\frac{|f(p)-f(q)|}{d_{\mathcal{E}_{\partial\Omega}}(p,q)}=\left(\frac43\right)^n \sup_{q\in V_n(\partial\Omega), q\sim_n p}\frac{|f(p)-f(q)|}{|p-q|},
\end{equation}
where $L_{\mathcal{E}_{\partial\Omega}}(f)$ denotes the $\mathcal{E}_{\partial\Omega}$-intrinsic Lipschitz constant of $f$. This follows from Corollary \ref{C:identifymetric} together with formula (\ref{E:Moscoformula}). For each $x\in L_n\cap \partial \T_n$ the value $g(x)=g_n(x)$ is obtained as an average of the values of $f$ on the points from $V_{n+1}(\partial\Omega)$ closest to $x$, and these points all have Euclidean distance $3^{-n}$ from $x$.

If $x,x'\in \partial \T_n$ have Euclidean distance $3^{-n}$ then there is a chain of maximally $8$ consecutive points from $V_{n+1}(\partial\Omega)$ that contains all points recruited to compute the averages $g_n(x)$ and $g_n(x')$. Figure \ref{F:Av} displays this most extreme case. Using (\ref{E:lipschitzratio}) and the triangle inequality we can then see that the maximal difference of the values on $f$ on these points is bounded by $ 4^{-n} 7\:L_{\mathcal{E}_{\partial\Omega}}(f)$. By averaging then also the difference of $g_n(x)$ and $g_n(x')$ is bounded by that number. Consequently the Euclidean-Lipschitz constant of the function $g$ on $\partial\T_n$ is controlled by $\left(\frac34\right)^n\:7\:L_{\mathcal{E}_{\partial\Omega}}(f)$. If $x\in \partial L_n\cap \T_n$ and $x'\in L_{n-1}\cap \partial\T_{n-1}$ have Euclidean distance $2\cdot 3^{-n}$ then the maximal Euclidean distance of the points on $V_{n+1}(\partial\Omega)$ recruited to compute the averages $g_n(x)$ and $g_{n-1}(x')$ is certainly bounded by $3^{-n+2}$. Proceeding similarly to see that there is a constant $c>0$, not depending on $x$, $x'$, $n$ or $f$ such that the Euclidean-Lipschitz norm of the function $g$ on $\T_n\setminus \mathring{\T}_{n-1}$ is controlled by $c\left(\frac34\right)^n\:L_{\mathcal{E}_{\partial\Omega}}(f)$.
 
\begin{remark} In some sense Corollary \ref{C:identifymetric} and formula (\ref{E:lipschitzratio}) imply not only that 
 $g$ is Euclidean-Lipschitz, but also that $g$ has ``zero (Euclidean) tangential derivative'' along $\partial\Omega$, because an   
 \EpO-intrinsic  Lipschitz function has to grow ``infinitely slow'' in the ``tangential direction'' to the Koch curve. 
\end{remark}

  \begin{figure}
  \scalebox{.5}{
 \def\HEX{ \put(0,0){\line(1, 0){12}}
 	\put(0,0){\line(3, 5){6}}
 	\put(12,0){\line(-3, 5){6}}
 	\put(0,0){\line(-1, 0){12}}
 	\put(0,0){\line(-3, 5){6}}
 	\put(-12,0){\line(3, 5){6}}
 	\put(-6,10){\line(1, 0){12}}
 	\put(0,0){\line(1, 0){12}}
 	\put(0,0){\line(3, -5){6}}
 	\put(12,0){\line(-3, -5){6}}
 	\put(0,0){\line(-1, 0){12}}
 	\put(0,0){\line(-3, -5){6}}
 	\put(-12,0){\line(3, -5){6}}
 	\put(-6,-10){\line(1, 0){12}}}
 \def\TT{\multiput(0,0)(6,10){2}{
 		\multiput(0,0)(12,0){2}{
 			\put(0,0){\line(3, 5){6}}
 	\put(12,0){\line(-3, 5){6}}
 	\put(0,0){\line(1, 0){12}}
 	\put(12,0){\line(3, 5){6}}
 	\put(6,10){\line(1, 0){12}}
 	}}}
 \def\TTu{ 	\put(0,0){\line(1, 0){24}}
 			\put(0,0){\line(3, 5){12}}
 			\put(24,0){\line(-3, 5){12}}
 			\put(12,0){\line(3, 5){6}}
 			\put(12,0){\line(-3, 5){6}}
 			\put(6,10){\line(1, 0){12}}
 		 		}
 \def\TTd{ 	\put(0,0){\line(1, 0){24}}
 	\put(0,0){\line(3, -5){12}}
 	\put(24,0){\line(-3, -5){12}}
 	\put(12,0){\line(3,- 5){6}}
 	\put(12,0){\line(-3, -5){6}}
 	\put(6,-10){\line(1, 0){12}}
 }
  \begin{picture}(324, 360)(-162, -180)
 \put(-175,-190)
 {\includegraphics[width=350pt,height=380pt]{snowflake-c.pdf}}
 \setlength{\unitlength}{3pt}
\put(0,0){\TTu\TTd}
\put(-24,0){\TTu\TTd}
\put(-12,20){\TTu\TTd}
\put(-12,-20){\TTu\TTd}
\put(12,-20){\TTu}
\put(-36,-20){\TTu}
\put(12,20){\TTd}
\put(-36,20){\TTd}
 \setlength{\unitlength}{1pt}
\put(36,60){\multiput(0,0)(24,0){4}{\TTu}}
\put(24,80){\multiput(0,0)(24,0){5}{\TTd}}
\put(36,60){\multiput(0,0)(-12,20){5}{\TTu}}
\put(24,80){\multiput(0,0)(-12,20){4}{\TTd}}
\put(-60,60){\multiput(0,0)(-24,0){4}{\TTu}}
\put(-48,80){\multiput(0,0)(-24,0){5}{\TTd}}
\put(-60,60){\multiput(0,0)(12,20){5}{\TTu}}
\put(-48,80){\multiput(0,0)(12,20){4}{\TTd}}
\put(24,-80){\multiput(0,0)(24,0){5}{\TTu}}
\put(36,-60){\multiput(0,0)(24,0){4}{\TTd}}
\put(24,-80){\multiput(0,0)(-12,-20){4}{\TTu}}
\put(24,-80){\multiput(0,0)(-12,-20){4}{\TTd}}
\put(-48,-80){\multiput(0,0)(-24,0){5}{\TTu}}
\put(-60,-60){\multiput(0,0)(-24,0){4}{\TTd}}
\put(-48,-80){\multiput(0,0)(12,-20){4}{\TTu}}
\put(-48,-80){\multiput(0,0)(12,-20){4}{\TTd}}
\put(72,0){\multiput(0,0)(12,20){4}{\TTu\TTd}}
\put(72,0){\multiput(0,0)(12,-20){4}{\TTu\TTd}}
\put(-96,0){\multiput(0,0)(-12,-20){4}{\TTu\TTd}}
\put(-96,0){\multiput(0,0)(-12,20){4}{\TTu\TTd}}
\put(96,80){\TTu}
\put(-120,80){\TTu}
\put(120,40){\TTu}
\put(-144,40){\TTu}
\put(12,-140){\TTu}
\put(-36,-140){\TTu}
\put(96,-80){\TTd}
\put(-120,-80){\TTd}
\put(120,-40){\TTd}
\put(-144,-40){\TTd}
\put(12,140){\TTd}
\put(-36,140){\TTd}
  \end{picture}
  }
 \caption{The set $\T_2\subset \Omega$, with $\T_1\subset \T_2$ triangulated by triangles of side length $\frac13$ and $\T_2\setminus \mathring{\T_1}$ triangulated by triangles of side length $\frac19$. }
  \label{F:T2}
 \end{figure} 

 \begin{figure}
  \scalebox{.5}{
\begin{tikzpicture}[decoration=Koch snowflake]
    \draw[line width = 3pt] decorate{ decorate{ decorate{ decorate{ (0,0) -- (27,0) }}}};
\draw[black, line width =2pt] (12,0) -- (15,0);
\draw[black, line width =2pt] ($(13.5,{sqrt(6.75)})$) -- ($(16.5,{-sqrt(6.75)})$) ;
\draw[black, line width =2pt] ($(13.5,{sqrt(6.75)})$) -- ($(10.5,{-sqrt(6.75)})$) ;
\draw[black, line width =2pt] ($(13.5,{-sqrt(6.75)})$) -- (15,0);
\draw[black, line width =2pt] ($(13.5,{-sqrt(6.75)})$) -- (12,0);
\draw[black, line width =2pt] ($(4.5,{-sqrt(6.75)})$) -- ($(22.5,{-sqrt(6.75)})$);

\draw[black, dashed] (9,0) -- (12,0);
\draw[black, dashed] ($(10.5,{sqrt(6.75)})$) -- (12,0);
\draw[black, dashed]  ($(13.5,{sqrt(6.75)})$) -- ($(10.5,{sqrt(6.75)})$);
\draw[black, dashed]  ($(13.5,{sqrt(6.75)})$) -- ($(16.5,{sqrt(6.75)})$);
\draw[black, dashed]  ($(13.5,{sqrt(6.75)})$) -- ($(12,{2*sqrt(6.75)})$);
\draw[black, dashed]  ($(13.5,{sqrt(6.75)})$) -- ($(15,{2*sqrt(6.75)})$);

\filldraw 
(12,0) circle (5pt) node[align=left, below] {.};
\node at (12,-0.8) {{\Large \bf x} };
\filldraw 
(9,0) circle (5pt) node[align=left, below] {.};
\filldraw 
($(10.5,{sqrt(6.75)})$) circle (5pt) node[align=left, below] {.};
\filldraw 
($(13.5,{sqrt(6.75)})$) circle (5pt) node[align=left, right] {.};
\node at ($(13.5,{sqrt(6.75)-0.8})$) {{\Large \bf x'}};
\filldraw 
($(16.5,{sqrt(6.75)})$) circle (5pt) node[align=left, right] {.};
\filldraw 
($(12,{2*sqrt(6.75)})$) circle (5pt) node[align=left, right] {.};
\filldraw 
($(15,{2*sqrt(6.75)})$) circle (5pt) node[align=left, right] {.};

\filldraw 
($(13.5,{3*sqrt(6.75)})$) circle (5pt) node[align=left, right] {.};
\filldraw 
($(9,{2*sqrt(6.75)})$) circle (5pt) node[align=left, right] {.};
\filldraw 
($(18,{2*sqrt(6.75)})$) circle (5pt) node[align=left, right] {.};

\end{tikzpicture}
}
 \caption{Chain of $8$ consecutive points from $V_{n+1}(\partial\Omega)$ that contains all points considered when computing the values $g_n(x)$ and $g_n(x')$ for neighbours $x$ and $x'$ on the shell $\partial\T_n$. }
  \label{F:Av}
\end{figure}
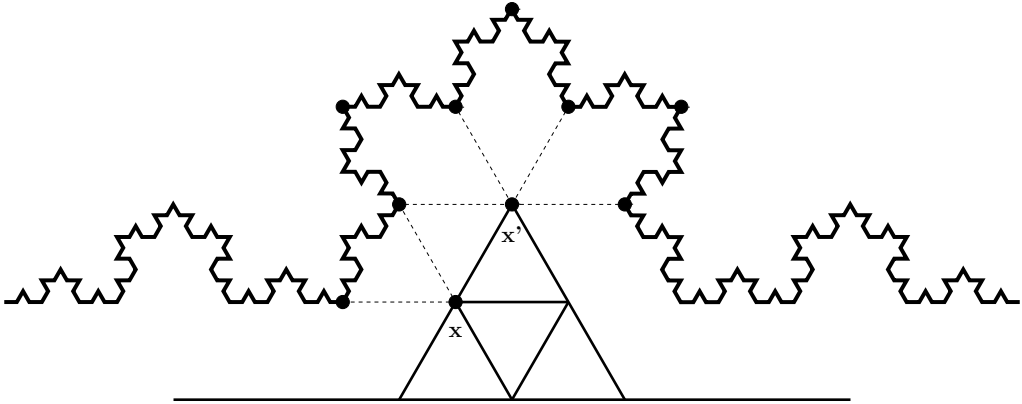

\subsection{A second proof of Theorem~\ref{thm:lip}}\label{subsub:lip2}
We also can present another, different, `pointwise' construction of $g$
 along the following steps. It uses a weakly self-similar triangulation, \emph{which is not nicely separated from the boundary} $\partial\Omega$ of the fractal snowflake domain 
$ %\overline
  		{\Omega}$.

\emph{Step 1}: 
We define $g(0)$ to be the average of $f$ 
at the  6th roots  of unity  $e^{ik\pi/3}$, $k=0,1,...,5$.  
  At each  6th root  of unity  $e^{ik\pi/3}$, $k=0,1,...,5$, we define 
$g(e^{ik\pi/3})=f(e^{ik\pi/3})$. 
After that we interpolate $g$ linearly in each unit equilateral triangle 
with vertices $0, e^{ik\pi/3}, e^{i(k+1)\pi/3}$. 
This defines $g$ in the regular closed convex unit hexagon 
which is the convex span of the  
6th root  of unity  $e^{ik\pi/3}$, $k=0,1,...,5$. 
We denote this closed hexagon $\overline{\Omega}_1$.

\begin{figure}
\begin{center}
\scalebox{.5}{
  	\def\HEX{ \put(0,0){\line(1, 0){12}}
  		\put(0,0){\line(3, 5){6}}
  		\put(12,0){\line(-3, 5){6}}
  		\put(0,0){\line(-1, 0){12}}
  		\put(0,0){\line(-3, 5){6}}
  		\put(-12,0){\line(3, 5){6}}
  		\put(-6,10){\line(1, 0){12}}
  		\put(0,0){\line(1, 0){12}}
  		\put(0,0){\line(3, -5){6}}
  		\put(12,0){\line(-3, -5){6}}
  		\put(0,0){\line(-1, 0){12}}
  		\put(0,0){\line(-3, -5){6}}
  		\put(-12,0){\line(3, -5){6}}
  		\put(-6,-10){\line(1, 0){12}}}
  	\begin{picture}(324, 360)(-162, -180)
  	\put(-175,-190)
  	{\includegraphics[width=350pt,height=380pt]{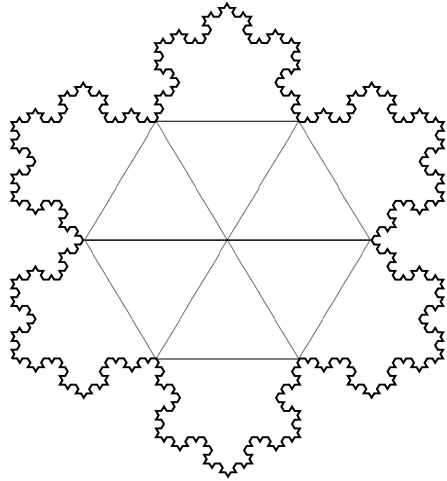}}
  	\setlength{\unitlength}{9pt}
  	\HEX
  	\end{picture}
  	}
  	\caption{The triangulated hexagon $\overline{\Omega}_1$.}
  	\label{F:Omega_1}  	
  	\end{center}
  \end{figure} 

\emph{Step 2}: Note that $\Omega\setminus\overline{\Omega}_1$ 
consists of six disjoint isometric open sets. 
The closure of each of these open sets contains a regular closed convex hexagon 
with sides $\frac13$. On the facet of this hexagon that is part of the boundary of $\overline{\Omega}_1$ the function $g$ is already defined, and naturally we do not change these values. In particular, $g$ is already defined on the vertices that are incident to this facet. The other four vertices of this hexagon lie 
on \pO, and so we define $g(x)=f(x)$ at these vertices. 
Now as we defined $g$ at the all vertices of the hexagon, 
we define $g$ at its center as the average at the vertices. Since the hexagon is naturally triangulated into six equilateral triangles of side $\frac13$, we can interpolate $g$ linearly in each of these triangles. 
Now we have defined $g$ in the closed set which is the union of the unit regular hexagon $\overline{\Omega}_1$
and the six adjacent closed hexagons with sides $\frac13$, We denote this union by $\overline{\Omega}_2$. 

\begin{figure}
\begin{center}
\scalebox{.5}{
  	\def\HEX{ \put(0,0){\line(1, 0){12}}
  		\put(0,0){\line(3, 5){6}}
  		\put(12,0){\line(-3, 5){6}}
  		\put(0,0){\line(-1, 0){12}}
  		\put(0,0){\line(-3, 5){6}}
  		\put(-12,0){\line(3, 5){6}}
  		\put(-6,10){\line(1, 0){12}}
  		\put(0,0){\line(1, 0){12}}
  		\put(0,0){\line(3, -5){6}}
  		\put(12,0){\line(-3, -5){6}}
  		\put(0,0){\line(-1, 0){12}}
  		\put(0,0){\line(-3, -5){6}}
  		\put(-12,0){\line(3, -5){6}}
  		\put(-6,-10){\line(1, 0){12}}}
  	\begin{picture}(324, 360)(-162, -180)
  	\put(-175,-190)
  	{\includegraphics[width=350pt,height=380pt]{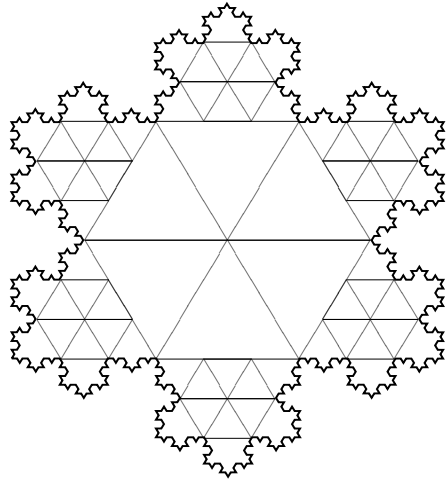}}
  	\setlength{\unitlength}{9pt}
  	\HEX
  	\setlength{\unitlength}{3pt}
  	\put(0,40){\HEX}
  	\put(0,-40){\HEX}
  	\put(36,20){\HEX}
  	\put(36,-20){\HEX}
  	\put(-36,20){\HEX}
  	\put(-36,-20){\HEX}
  	\end{picture}
  	}
  	\caption{The triangulated set $\overline{\Omega}_2$.}
  	\label{F:Omega_2}  	
  	\end{center}
  \end{figure} 

\emph{Step 3}: Note that $\Omega\setminus\overline{\Omega_2}$ 
consists of 30 open components. 
These components come in two shapes: 
18 components are $\frac{1}{3}$ in size in comparison to 
   the components considered in Step 2; 
the other 12 component are of the same scale, but have only one fractal side, rather than two fractal sides. 
In any case, the closure of each of these 30 
components has an inscribed 
  regular closed convex hexagon 
with sides $\frac19$. On each of these hexagons we can define $g$ similarly as in Step 2. Let $\overline{\Omega}_3$ be the union of all these closed hexagons and $\overline{\Omega}_2$. See Figure \ref{F:cont_ind}.

It is easy to proceed by induction afterwards. If for each $n$ we denote the interior of $\overline{\Omega}_n$ by $\Omega_n$, then the increasing domains $\Omega_n$ exhaust $\Omega$ from within.

This theorem is clearly local, and similarly as in the first proof,   
the required properties of the extension function $g$ are again implied by scaling.

\begin{figure}
\begin{center}
\scalebox{.5}{
  	\def\HEX{ \put(0,0){\line(1, 0){12}}
  		\put(0,0){\line(3, 5){6}}
  		\put(12,0){\line(-3, 5){6}}
  		\put(0,0){\line(-1, 0){12}}
  		\put(0,0){\line(-3, 5){6}}
  		\put(-12,0){\line(3, 5){6}}
  		\put(-6,10){\line(1, 0){12}}
  		\put(0,0){\line(1, 0){12}}
  		\put(0,0){\line(3, -5){6}}
  		\put(12,0){\line(-3, -5){6}}
  		\put(0,0){\line(-1, 0){12}}
  		\put(0,0){\line(-3, -5){6}}
  		\put(-12,0){\line(3, -5){6}}
  		\put(-6,-10){\line(1, 0){12}}}
  	\begin{picture}(324, 360)(-162, -180)
  	\put(-175,-190)
  	{\includegraphics[width=350pt,height=380pt]{snowflake-c.pdf}}
  	\setlength{\unitlength}{9pt}
  	\HEX
  	\setlength{\unitlength}{3pt}
  	\put(0,40){\HEX}
  	\put(0,-40){\HEX}
  	\put(36,20){\HEX}
  	\put(36,-20){\HEX}
  	\put(-36,20){\HEX}
  	\put(-36,-20){\HEX}
  	\setlength{\unitlength}{1pt}
  	\put(0,160){\HEX}
  	\put(36,140){\HEX}
  	\put(36,100){\HEX}
  	\put(72,80){\HEX}
  	\put(108,100){\HEX}
  	\put(144,80){\HEX}
  	\put(144,40){\HEX}
  	\put(108,20){\HEX}
  	\put(0,-160){\HEX}
  	\put(36,-140){\HEX}
  	\put(36,-100){\HEX}
  	\put(72,-80){\HEX}
  	\put(108,-100){\HEX}
  	\put(144,-80){\HEX}
  	\put(144,-40){\HEX}
  	\put(108,-20){\HEX}
  	\put(-0,160){\HEX}
  	\put(-36,140){\HEX}
  	\put(-36,100){\HEX}
  	\put(-72,80){\HEX}
  	\put(-108,100){\HEX}
  	\put(-144,80){\HEX}
  	\put(-144,40){\HEX}
  	\put(-108,20){\HEX}
  	\put(-0,-160){\HEX}
  	\put(-36,-140){\HEX}
  	\put(-36,-100){\HEX}
  	\put(-72,-80){\HEX}
  	\put(-108,-100){\HEX}
  	\put(-144,-80){\HEX}
  	\put(-144,-40){\HEX}
  	\put(-108,-20){\HEX}
  	\end{picture}
  	}
  	\caption{The triangulated set $\overline{\Omega}_3$.}
  	\label{F:cont_ind}  	
  	\end{center}
  \end{figure}

\subsection{Proof of Corollary~\ref{cor:coord}}
The first statement (i) 
of this corollary can be  proved by smoothing 
the constructed Euclidean-Lipschitz extensions of $\mathcal{E}_{\partial\Omega}$-intrinsic $C^1$-functions. 

The second statement (ii) can be proved by first constructing Lipschitz coordinates which in a sense may be seen as ``deformed polar coordinates'', and then smoothing them.
We explain the construction of the Lipschitz coordinates only briefly  because the proof of Theorem~\ref{thm:lip}. 

First, we note that it is enough to construct coordinates only locally. 
To construct $y_1$ we can begin with a function which is linear in the intrinsic distance on an open subset of $\partial\Omega$. For instance, we can consider the open subset $\mathring{K}_i$ and a function of form $F\circ h_i$ with $F:\mathbb{R}\to\mathbb{R}$ being an linear function. We can then consider its Euclidean-Lipschitz extension to a sectorial part of $\Omega$ similarly as
constructed in the first proof of Theorem~\ref{thm:lip}. It is easy to see that the extension is locally skew-symmetric (up to a local constant) with respect to the natural local symmetries (if we from some point $x\in \mathring{K}_i$ we go the same distance to the left and to the right, the differences of the function values seen there and $F\circ h_i(x)$ will be equal in modulus but have different sign). In a sense, the function $y_1$ may be seen as the (local) ``angular coordinate'' of the deformed polar coordinates.

Now it is easy to find a complimentary Lipschitz function $y_2$ which vanishes on \oO\  and separates points on each level set of $y_1$. To describe a possible construction for $y_2$, we use the same notation as in the first proof of Theorem~\ref{thm:lip}.
For each $p\in\partial\T_n$ we define $y_2(p)=3^{-n}$. Between the shells  
$\partial\T_n$ and $\partial\T_{n+1}$ we define $y_2$ by linear interpolation. This may be regarded as the ``radial component'' of our coordinates. Together $y_1$ and $y_2$ separate points. Note that both, the definition of $y_1$ and the definition of $y_2$ display some sort of ``weak self-similarity''. After having constructed these Lipschitz coordinates, one can obtain the $C^1$-coordinates by smoothing. 

The construction is essentially elementary because the weak self-similarity allows to define coordinates in a finitely many points, and then define extensions using translations and scaling. It is especially simple to use the 
  second proof of Theorem~\ref{thm:lip}, Subsection~\ref{subsub:lip}, to construct the piecewise linear version of $y_2$ and then smooth it.

\begin{appendix}

\section{Trace and extension results}\label{App:A}

For the convenience of the reader we provide some more detailed references for known trace and extension results used to see that (\ref{E:Hilbert}) turns $V(\Omega,\partial\Omega)$ into a Hilbert space.

A function $f\in L^1(\Omega)$ can be strictly defined at $x\in \overline{\Omega}$ if 
the limit
\[\widetilde{f}(x):=\lim_{\varepsilon\to 0}\frac{1}{\mathcal{L}^2(B(x,\varepsilon)\cap\Omega)}\int_{B(x,\varepsilon)\cap\Omega}f(y)\mathcal{L}^2(dy)\]
exists. Any $f\in H^1(\Omega)$ can be strictly defined at quasi-every $x\in\overline{\Omega}$ (in the sense of the Newtonian capacity on $\mathbb{R}^2$). Writing $\Tr f:=f|_{\partial\Omega}$, where $f|_{\partial\Omega}=\widetilde{f}(x)$, $x\in\partial\Omega$, the trace operator $f\mapsto \Tr f$ defines a bounded linear operator from $H^1(\Omega)$ onto $B_{\delta/2}^{2,2}(\partial\Omega)$, and it has a bounded linear right inverse $f\mapsto \Ext f$. Here $B_\beta^{2,2}(\partial\Omega)$, $0<\beta<1$, denotes the space of all elements $f$ of $L^2(\partial\Omega,\mu)$ such that
\[\left\|f\right\|_{B_\beta^{2,2}(\partial\Omega)}=\left\|f\right\|_{L^2(\partial\Omega,\mu)}+\left(\int\int_{|x-y|<1}\frac{|f(x)-f(y)|^2}{|x-y|^{\delta+2\beta}}\mu(dx)\mu(dy)\right)^{1/2}<+\infty.\]
See \cite[Theorem 2]{W91} or \cite[Theorem 1, Chapter VII]{JW84} (and also \cite{Tri97} for related methods). These trace and extension results make use of the fact that $\mu$ is a $\delta$-measure, i.e. there is a constant $c>0$ such that $cr^\delta\leq \mu(B(x,r))\leq c^{-1}r^\delta$ for all $x\in\partial\Omega$ and $0<r<1$. Here $B(x,r)$ denotes the open ball (in Euclidean metric) centered at $x$ with radius $r$. These arguments have also been used in \cite[Proposition 2.10]{LaV14} and in similar form in \cite[Theorem 4.6]{La02}. Moreover, by arguments from \cite{LaVi99} the space $\mathcal{D}(\mathcal{E}_{\partial\Omega})$ is seen to be embedded in $B_{\delta/2}^{2,2}(\partial\Omega)$, see also \cite[Proposition 4.9]{La02}. Combining these results, one observes that with (\ref{E:Hilbert}) the space $V(\Omega,\partial\Omega)$ is Hilbert. 

\end{appendix}


\begin{thebibliography}
\normalsize
\def\arxiv#1{\href{http://arxiv.org/abs/#1}{arXiv:#1}}



%------ {\it some of the references below may not be needed} ------


\bibitem{A}E. Akkermans, \emph{Statistical mechanics and quantum fields on fractals},
    Fractal geometry and dynamical systems in pure and applied
      mathematics. II. Fractals in applied mathematics,
  Contemp. Math.,
      {\bf601},
      {Amer. Math. Soc., Providence, RI},
{2013},
 {1--21}.

\bibitem{AkkermansMallick} 
E. Akkermans and K. Mallick, \emph{Geometrical description of vortices in Ginzburg-Landau billiards}, in "Topological Aspects of  low dimensional systems." Les Houches Summer School (session LXIX), Springer (1999).

\bibitem{ADT} E. Akkermans,  G. Dunne, A. Teplyaev, 
\emph{Physical Consequences of Complex Dimensions of Fractals,}\   
Europhys. Lett. {\bf88}, 40007 (2009).


\bibitem{ADT2010PRL} E. Akkermans,  G. Dunne, A. Teplyaev, 
%Eric Akkermans, Gerald~V. Dunne, and Alexander Teplyaev.
 \emph{Thermodynamics of photons on fractals}.
 Phys. Rev. Lett. 105(23):230407, 2010.

\bibitem{ADTV} 
{E. Akkermans, G.V. Dunne, A. Teplyaev and R. Voituriez, \emph{Spatial Log periodic oscillations of First-Passage observables in fractals},  Phys. Rev. E 86, 061125 (2012)} \arxiv{1207.3298}  

\bibitem {Ba98} M. T. Barlow,
\emph{Diffusions on fractals.}\
 Lectures on Probability Theory and Statistics (Saint-Flour, 1995), 1--121,
Lecture Notes in Math., \textbf{1690},
Springer, Berlin, 1998.

\bibitem{BM95}
M. Biroli, U. Mosco, \emph{A Saint-Venant type principle for Dirichlet forms on discontinuous media},
Ann. Mat. Pura Appl. 169 (4) (1995), 125--181.


\bibitem{BH91}
N. Bouleau, F. Hirsch, \emph{Dirichlet Forms and Analysis on Wiener Space},
deGruyter Studies in Math. 14, deGruyter, Berlin, 1991.

\bibitem{CLa14}
M.~Cefalo and M.~R. Lancia.
\newblock An optimal mesh generation algorithm for domains with Koch type
  boundaries.
\newblock {\em Mathematics and Computers in Simulation}, 106:133--162, 2014.
 
\bibitem{CS03}
F. Cipriani, J.-L. Sauvageot, \emph{Derivations as square roots of Dirichlet forms},
J. Funct. Anal. {\bf201} (2003), 78--120.

\bibitem{CS09}
F. Cipriani, J.-L. Sauvageot, \emph{Fredholm modules on p.c.f. self-similar fractals and their conformal geometry},
Comm. Math. Phys. 286 (2009), 541-558.

\bibitem{Dunne}G.V. Dunne, \emph{Heat kernels and zeta functions on fractals},
    {J. Phys. A},
    {\bf45},
    {(2012)},
   %number={37},
    {374016, 22}.


\bibitem{E1}
E.~Evans.
\newblock A finite element approach to {$H^1$} extension using prefractals.
\newblock {\em Adv. Math. Sci. Appl.}, 22(2):391--420, 2012.

\bibitem{E2}
E.~J. Evans.
\newblock A finite element approach to {H}\"older extension using prefractals.
\newblock {\em Methods Appl. Anal.}, 19(2):161--186, 2012.


\bibitem{FLV95}
J.~Fleckinger, M.~Levitin, and D.~Vassiliev.
\newblock Heat equation on the triadic von {K}och snowflake: asymptotic and
  numerical analysis.
\newblock {\em Proc. London Math. Soc. (3)}, 71(2):372--396, 1995.

\bibitem{FL04}
U.R. Freiberg, M.R. Lancia, \emph{Energy form on a closed fractal curve},
Z. Anal. Anwend. 23 (1) (2004), 115-135.

\bibitem{FOT94}
M. Fukushima, Y. Oshima and M. Takeda, \textit{Dirichlet forms and symmetric Markov processes},
deGruyter, Berlin, New York, 1994.

\bibitem{LG97}
C.~A. Griffith and M.~L. Lapidus.
\newblock {\em Computer graphics and the eigenfunctions for the Koch snowflake
  drum}.
  Progress in Inverse Spectral Geometry,
  Trends in Mathematics,  
\newblock Springer, 1997.


\bibitem 
{GK15}
Grigor'yan, A., Kajino, N., \emph{Localized upper bounds of heat kernel for diffusions via a multiple Dynkin-Hunt formula}, to appear in Trans. AMS. 


\bibitem 
{HK03} B. M. Hambly and T. Kumagai,  \emph{Diffusion processes on fractal fields: heat kernel estimates and large deviations.}\ Probab. Theory Related Fields \textbf{127} (2003), 305--352.



\bibitem{Hino13}
M. Hino, \emph{Measurable Riemannian structures associated with strongly local Dirichlet forms}, Math. Nachr. 286 (2013), 1466-1478.


\bibitem 
{HinoK2006} M.~Hino and T.  Kumagai,   \emph{A trace theorem for
Dirichlet forms on fractals.}
 J. Funct. Anal.  {\bf 238}  (2006),   578--611.


\bibitem{H14b}
M. Hinz, \emph{Magnetic energies and Feynman-Kac-Ito formulas for symmetric Markov processes},	Stoch. Anal. Appl. 33 (6) (2015), 1020-1049.

\bibitem{H14a}
M. Hinz, \emph{Sup-norm-closable bilinear forms and Lagrangians}, Ann. Mat. Pura Appl., to appear, \arxiv{1407.1301}

\bibitem{HR14}
M. Hinz, L. Rogers, \emph{Magnetic fields on resistance spaces}, to appear in J. Fractal Geometry (2015+), \arxiv{1501.01100}.


\bibitem{HKT15}
M. Hinz, D. Kelleher, A. Teplyaev, \emph{Metrics and spectral triples for Dirichlet and  forms}, to apper in J. Noncommut. Geom., \arxiv{1309.5937}

\bibitem{HRT13}
M. Hinz, M. R\"ockner, A. Teplyaev, \emph{Vector analysis for  Dirichlet forms and quasilinear PDE and SPDE on metric measure spaces}, Stoch. Proc. Appl. {\bf 123}(12) (2013), 4373-4406.

\bibitem{HTc}
M. Hinz, A. Teplyaev, \emph{Dirac and magnetic Schr\"odinger operators on fractals},
J. Funct. Anal. {\bf 265}(11) (2013), 2830-2854.

\bibitem{HTams}
{M. Hinz,   A. Teplyaev, {Local Dirichlet forms, Hodge theory, and
the Navier-Stokes equations on topologically one-dimensional
fractals}, \textit{Trans. Amer. Math. Soc.} {\bf 367}  (2015),
1347--1380.}

\bibitem{HT15a}
M. Hinz, A. Teplyaev, \emph{Finite energy coordinates and vector analysis on fractals}, Progress in Probab. 70, Fractal Geometry and Stochastics V, Birkh\"auser, 2015, pp. 209-227.

\bibitem{HTcurl}
M.~Hinz and A.~Teplyaev, \emph{Densely defined non-closable curl on topologically one-dimensional Dirichlet metric measure spaces}, preprint \arxiv{1505.02819}.

\bibitem{HW06}
J. Hu, X.S. Wang, \emph{Domains of Dirichlet forms and effective resistance estimates on p.c.f. fractals}, Studia Math. 177 (2006), 153-172. 


\bibitem{IRT}
M. Ionescu, L. Rogers, A. Teplyaev, \emph{Derivations and Dirichlet forms on fractals},
J. Funct. Anal. {\bf 263} (2012), no. 8, 2141--2169. 

\bibitem{JW84}
A. Jonsson, H. Wallin, \emph{Function Spaces on Subsets of $\mathbb{R}^n$}, Math. Reports, vol. 2, Harwood Acad. Publ., London, 1984.

\bibitem{Ki93} J. Kigami,
\emph{Harmonic metric and Dirichlet form on the Sierpi\'nski gasket.}\
 Asymptotic
problems in probability theory: stochastic models and
diffusions on fractals (Sanda/Kyoto, 1990), 201--218,
Pitman Res. Notes Math. Ser., \textbf{283},
Longman Sci. Tech., Harlow, 1993.

\bibitem{Ki01}
J. Kigami, \emph{Analysis on Fractals}, Cambridge Univ. Press, Cambridge, 2001.
Stoch. Anal. Appl. 33 (6) (2015), 1020-1049.


\bibitem{Ki03}
J. Kigami, \emph{Harmonic analysis for resistance forms}, J. Funct. Anal. 204 (2003), 525--544.

\bibitem{Ki12}
J. Kigami, \emph{Resistance forms, quasisymmetric maps and heat kernel estimates}, Mem. Amer. Math. Soc. 216, no. 1015, 2012.

\bibitem{KL93}
J.~Kigami and M.~L. Lapidus.
\newblock Weyl's problem for the spectral distribution of {L}aplacians on
  p.c.f.\ self-similar fractals.
\newblock {\em Comm. Math. Phys.}, 158(1):93--125, 1993.

\bibitem 
{Ku2000} T. Kumagai,
\emph{Brownian motion penetrating fractals: an application of the trace theorem of Besov spaces.}\
J. Funct. Anal. \textbf{170} (2000) 69--92.

\bibitem{Ku89}
S. Kusuoka, \emph{Dirichlet forms on fractals and products of random matrices},
Publ. Res. Inst. Math. Sci. 25 (1989), 659-680.

\bibitem{Ku93} S. Kusuoka,
\emph{Lecture on diffusion process on nested fractals.}\
Lecture Notes in Math. \textbf{1567} 39--98,
Springer-Verlag, Berlin, 1993.

\bibitem{La02}
M.~R. Lancia.
\newblock A transmission problem with a fractal interface.
\newblock {\em Zeitschrift f{\"u}r Analysis und ihre Anwendungen},
  21(1):113--133, 2002.

\bibitem{La03}
M.R. Lancia, \emph{Second order transmission problems across a fractal surface},
Rend. Accad. Naz. Sci. XL Mem. Mat. Appl. (5) 27 (2003), 191-213.

\bibitem{LaV06}
M.R. Lancia, P. Vernole, \emph{Convergence results for parabolic transmission problems across highly conductive layers with small capacity},
Adv. Math. Sci. Appl. 16 (2) (2006), 411-445.

\bibitem{LaV13}
M.R. Lancia, P. Vernole, \emph{Semilinear fractal problems: Approximation and regularity results}, Nonlinear Anal. Theory Methods Appl. 80 (2013), 216-232.


\bibitem{LaV14}
M.~R. Lancia and P.~Vernole.
\newblock Venttsel' problems in fractal domains.
\newblock {\em Journal of Evolution Equations}, 14(3):681--712, 2014.

\bibitem{LaVi99}
M.R. Lancia, M.A. Vivaldi, \emph{Lipschitz spaces and Besov traces on self-similar fractals}, Rend. Accad. Naz. Sci. XL Mem. Mat. Appl. (5) 23 (1999), 101-106.

\bibitem{L91}
M.~L. Lapidus,
\newblock Fractal drum, inverse spectral problems for elliptic operators and a
  partial resolution of the {W}eyl-{B}erry conjecture.
\newblock {\em Trans. Amer. Math. Soc.}, 325(2):465--529, 1991.

\bibitem{LNRG96}
M.~L. Lapidus, J.~Neuberger, R.~J. Renka, and C.~A. Griffith.
\newblock Snowflake harmonics and computer graphics: numerical computation of
  spectra on fractal drums.
\newblock {\em International Journal of Bifurcation and Chaos},
  6(07):1185--1210, 1996.

\bibitem{LP95}
M.~L. Lapidus and M.~M. Pang.
\newblock Eigenfunctions of the koch snowflake domain.
\newblock {\em Communications in mathematical physics}, 172(2):359--376, 1995.

\bibitem{LP06}
M.~L. Lapidus and E.~P. Pearse.
\newblock A tube formula for the koch snowflake curve, with applications to
  complex dimensions.
\newblock {\em Journal of the London Mathematical Society}, 74(02):397--414,
  2006.

\bibitem{LT13}
D. Lenz, A. Teplyaev, \emph{Expansion in generalized eigenfunctions for Laplacians on graphs and metric measure spaces}. Trans. Amer. Math. Soc., 
Published electronically: June 24, 2015, \arxiv{1310.5650}


\bibitem 
 {Li} T. Lindstr{\o}m,
 \emph{Brownian motion on nested fractals.}\
 Mem.  Amer.  Math.  Soc. \textbf{420}, 1989.
 
 
\bibitem{Lu95}
A. Lunardi, \emph{Analytic Semigroups and Optimal Regularity in Parabolic Problems},
Progress in Nonlinear Differential Equations and Their Applications, vol. 16, Birkh\"auser, Basel, 1995. 
 
 \bibitem{MR92}
Z.-M. Ma, M. R\"ockner, \emph{Introduction to the Theory of Non-Symmetric Dirichlet Forms}, 
Universitext, Springer, Berlin, 1992.


\bibitem{Mo94}
U. Mosco, \emph{Composite media and asymptotic Dirichlet forms},
J. Funct. Anal. 123 (1994), 368--421.

\bibitem{Mo97}
U. Mosco, \emph{Variational fractals}, Annali della Scuola Normale Superiore di Pisa. Classe di Scienze. Serie IV vol. 25 (3-4) (1997), 683--712.

\bibitem{Mo02}
U. Mosco, \emph{Energy functionals on certain fractal structures}, J. Convex Anal. 9 (2) (2002), 581--600.

\bibitem{Mo04}
U. Mosco, \emph{An elementary introduction to fractal analysis}, In: Nonlinear Analysis and Applications to Physical Sciences, Benci, V., Masiello, A. (Eds.), Springer, Milan, 2004.


 \bibitem{RS80}
M. Reed, B. Simon, \emph{Methods of Modern Mathematical Physics, vol. 2}, Acad. Press, San Diego 1980.
 
 
 
\bibitem{Str00}
R.S. Strichartz, \emph{Taylor approximations on Sierpinski gasket type fractals},
J. Funct. Anal. 174 (2000), 76-127. 
 
\bibitem{Str06}
R.S. Strichartz, \emph{Differential Equations on Fractals: A Tutorial}, Princeton Univ. Press, Princeton 2006.


\bibitem{Stu94}
K.-Th. Sturm, \emph{Analysis on local Dirichlet spaces - I. Recurrence, conservativeness and $L^p$-Liouville properies},
J. reine angew. Math. 456 (1994), 173-196. 

\bibitem{Stu95}
K.-Th. Sturm, \emph{On the geometry defind by Dirichlet forms},
In: Progress in Probability, Vol. 36, Birkh\"auser, Basel 1995, p. 231-242.



\bibitem{T00} A. Teplyaev, \emph{Gradients on fractals.}\ J. Funct.
Anal. \textbf{174} (2000) 128--154.

\bibitem{T08}
A. Teplyaev, \emph{Harmonic coordinates on fractals with finitely ramified cell structure}, Canad. J. Math. 60 (2008), 457--480.

\bibitem{Tri97}
H. Triebel, \emph{Fractals and Spectra Related to Fourier Analysis and Function Spaces}, Monographs in Math. vol. 91, Birkh\"auser, Basel, 1997.

\bibitem{vdB}
M.~van~den Berg.
\newblock Heat equation on the arithmetic von {K}och snowflake.
\newblock {\em Probab. Theory Related Fields}, 118(1):17--36, 2000.

\bibitem{W91}
H. Wallin, \emph{The trace to the boundary of Sobolev spaces on a snowflake}, Manuscripta Math. 73 (1991), 117-125. 

\end{thebibliography}
\end{document}